\renewcommand*\env@matrix[1][*\c@MaxMatrixCols c]{%
  \hskip -\arraycolsep
  \let\@ifnextchar\new@ifnextchar
  \array{#1}}
\newtheorem{thm}{Theorem}[section] 
\newtheorem{lem}[thm]{Lemma}
\newtheorem{prop}[thm]{Proposition}
\newtheorem{cor}[thm]{Corollary}
\newtheorem{conj}[thm]{Conjecture}
\theoremstyle{remark}
       \newtheorem{rmk}{Remark}
\theoremstyle{remark}
\newtheorem{definition}{Definition}
\newcommand*{\Scale}[2][4]{\scalebox{#1}{$#2$}}%
\newcommand{\Mod}[1]{\ (\textup{mod}\ #1)}     
\newcommand{\QQ}{\mathbb{Q}}
 \newcommand{\Orb}{\operatorname{Orb}}
  \newcommand{\Gal}{\operatorname{Gal}}
 \newcommand{\ord}{\operatorname{ord}}  
  \newcommand{\GL}{\operatorname{GL}}  
    \newcommand{\Res}{\operatorname{Res}}
 \newcommand{\Aut}{\operatorname{Aut}}
\newcommand{\mysetminus}{\mathbin{\fgebackslash}}
\title{Classifying Galois groups of small iterates via rational points} 
\author[Wade Hindes]{Wade Hindes}
\address{Department of Mathematics, The Graduate Center, City University of New York (CUNY); 365 Fifth Avenue, New York, NY 10016, USA}
\email{whindes@gc.cuny.edu}
\date{\today}
\begin{document}
\maketitle
\begin{abstract} \normalsize We establish several surjectivity theorems regarding the Galois groups of small iterates of $\phi_c(x)=x^2+c$ for $c\in\mathbb{Q}$. To do this, we use explicit techniques from the theory of rational points on curves, including the method of Chabauty-Coleman and the Mordell-Weil sieve. For example, we succeed in finding all rational points on a hyperelliptic curve of genus $7$, with rank $5$ Jacobian, whose points parametrize quadratic polynomials with a ``newly small" Galois group at the fifth stage of iteration.  
\end{abstract}
\renewcommand{\thefootnote}{}
\footnote{2010 \emph{Mathematics Subject Classification}: Primary: 11R32, 37P15. Secondary: 14G05.}
\section{Introduction} 
Let $K$ be a global field of characteristic zero, let $\phi(x)\in K(x)$ be a rational function of degree $d\geq2$, and let $\phi^n$ denote the $n$th iterate of $\phi$. For basepoints $b\in K$, it is a major goal of arithmetic dynamics to understand the Galois groups 
\[G_n(\phi,b):=\Gal_K(\phi^n(x)-b).\]
For \emph{generic} $b\in K$, i.e. points where the equations $\phi^n(x)=b$ have $d^n$ distinct solutions in $\overline{K}$ for all $n\geq1$, one can view the Galois group $G_n(\phi,b)$ as a subgroup of the automorphism group of the $n$th-\emph{level preimage tree}, 
\[T_n(\phi,b):=\bigsqcup_{m=0}^n \phi^{-n}(b),\]
where the edge relation is given by evaluation of $\phi$. In particular, for generic baspoints, $T_n(\phi,b)$ is isomorphic (as a graph) to $T_{d,n}$, the $d$-ary rooted tree with $n$ levels, and is therefore dependent only on the degree of the map. Moreover, since the relevant splitting fields are nested, i.e. $K(T_m(\phi,b))\subseteq K(T_n(\phi,b))$ for $m\leq n$, we may form the inverse limit 
\[ G_\infty(\phi,b):=\lim_{\longleftarrow}G_n(\phi,b)\] 
with respect to the restriction maps. Finally, since the natural action of $G_\infty(\phi,b)$ on each $T_n(\phi,b)$ is compatible with restriction and inclusion, we obtain an injection 
\[ G_\infty(\phi,b)\leq \Aut(T_d),\] 
where $T_d:=\bigsqcup\, T_{d,n}$ is the full $d$-ary rooted preimage tree of $b$ with respect to $\phi$. This inclusion is known as the \emph{arboreal representation} \cite{Jonessurvey} associated to the pair $(\phi,b)$. 

The arboreal representations we have defined are dynamical analogs of the classical $\ell$-adic representations attached to elliptic curves (or more generally, abelian varieties) \cite{Serre}, where one instead appends to the ground field iterated preimages of the identity with respect to multiplication by $\ell$ maps. 

However, in practice there is a key difference between these two types of representations: to prove the surjectivity of the $\ell$-adic  representations attached to elliptic curves, it suffices to prove surjectivity onto some finite quotient. Namely, if $G\leq\GL_2(\mathbb{Z}_\ell)$ is a closed subgroup that surjects onto $\GL_2(\mathbb{Z}/\ell^n\mathbb{Z})$ for some small $n$, then $G$ must be equal to $\GL_2(\mathbb{Z}_\ell)$; see \cite{ladicsurj}. However, the analogous statement fails for closed subgroups $G\leq\Aut(T_d)$. Nonetheless, the author has salvaged a version of this ``surjective-rigidity" for subgroups $G\leq\Aut(T_2)$ coming from arboreal representations of quadratic polynomials defined over function fields of characteristic zero. \textbf{In what follows, we suppress the basepoint $b$ when $b=0$}. \vspace{.1cm}   
\begin{thm}[\cite{Me:GaloisUnif}] Let $k$ be a field of characteristic zero, and let $K=k(t)$ be a rational function field. For quadratic polynomials $\phi(x)\in K[x]$, write 
\[\phi(x)=\big(x-\gamma(t)\big)^2+c(t)\] for some $\gamma, c\in k[t]$ by completing the square. If $\deg(\gamma-c)=\max\{\deg(\gamma), \deg(c)\}$, then \vspace{.075cm}
\[G_{17}(\phi)=\Aut(T_{2,17})\;\;\;\text{implies}\;\;\; G_\infty(\phi)=\Aut(T_2). \vspace{.075cm}\] 
That is, if $\gamma$ and $c$ do not have the same leading term, the arboreal representation associated to $\phi$ is surjective if and only if it is surjective at the $17$th stage of iteration.\vspace{.1cm}   
\end{thm}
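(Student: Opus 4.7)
The plan is to leverage the standard square-detection criterion for quadratic arboreal representations: over a field of characteristic zero, $G_\infty(\phi,0)=\Aut(T_2)$ is equivalent to requiring, for every $n\geq 1$, that an explicit ``adjusted critical orbit'' element $a_n\in K$ be a non-square in the splitting field $K_{n-1}$ of $\phi^{n-1}(x)$. After conjugating by $x\mapsto x+\gamma$, the map $\phi$ takes the standard form $\psi(x)=x^2+(c-\gamma)$ with basepoint $-\gamma$, so the hypothesis on $\deg(\gamma-c)$ becomes a direct degree condition on the shifted constant $c-\gamma$. The assumption $G_{17}(\phi)=\Aut(T_{2,17})$ then supplies the required non-squareness for $n\leq 17$, and the entire problem reduces to ruling out, for every $n\geq 18$, that $a_n$ becomes a square in $K_{n-1}$.

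For the inductive step I would first track $\deg_t(a_n)$ as $n$ grows. The hypothesis $\deg(\gamma-c)=\max\{\deg(\gamma),\deg(c)\}$ guarantees that the leading terms of $\gamma$ and $c$ do not cancel when one forms the iterates $\psi^n(-\gamma)$, so a routine induction shows that $\deg_t(a_n)$ grows like $2^{n-1}\cdot\deg(\gamma-c)$. In particular, once $n$ is past a certain threshold, $a_n$ acquires an odd-order pole (at the place $\infty$ of $k(t)$, say) whose order strictly exceeds the ramification available inside $K_{n-1}/K$.

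The key mechanism is then function-field ramification. If $a_n$ were a square in $K_{n-1}$, then every place of $K=k(t)$ at which $a_n$ has odd valuation would have to be ramified in $K_{n-1}/K$. But the ramified places of $K_{n-1}/K$ are controlled by the post-critical orbit of $\psi$ and therefore lie inside an \emph{a priori} finite set $S$ determined by $\gamma$, $c$, and finitely many early iterates. Combining the degree growth of $a_n$ with an analysis of its valuations outside $S$ forces $a_n$ to have odd valuation at some place not in $S$, contradicting squareness for all sufficiently large $n$.

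The main obstacle is making this numerology sharp enough to produce the \emph{precise} threshold $n=17$. A soft version of the argument above yields some threshold $n_0$, but pinning down $n_0=17$ requires simultaneous control of the leading term of $a_n$, the size of the ``bad'' set $S$, and any temporary cancellation phenomena at small levels of iteration. In practice this seems to demand an explicit catalog of all ways in which low-level iterates of $\psi$ could either conspire to produce a square or enlarge $S$ under the non-cancellation hypothesis, and a direct verification that each such pathology has disappeared by the seventeenth stage. The specific number $17$ presumably tracks the worst case that survives this bookkeeping.
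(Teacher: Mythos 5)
This statement is cited from \cite{Me:GaloisUnif} and is not proved in the present paper, so I am evaluating your sketch on its merits against what the quoted criterion and the author's known techniques actually require.

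Your high-level framework is the right one: pass to the square-detection criterion (here packaged as Theorem~\ref{thm:surjective} and Lemma~\ref{2-ind}), note that the non-cancellation hypothesis $\deg(\gamma-c)=\max\{\deg\gamma,\deg c\}$ forces $\deg_t\phi^n(\gamma)=2^{n-1}\deg(\gamma-c)$ for $n\geq2$, and try to show that a new odd-valuation place must appear. Two points in the argument as written, however, do not hold up. First, the claim that ``once $n$ is past a certain threshold, $a_n$ acquires an odd-order pole at $\infty$'' is false as stated: $\deg a_n=2^{n-1}\deg(\gamma-c)$ is even for $n\geq2$, so the pole at $\infty$ has even order; the odd-valuation places one needs come from finite primes, not from $\infty$. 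Second, and more seriously, the ``a priori finite set $S$'' of ramified places of $K_{n-1}/K$ is \emph{not} uniform in $n$: the ramification locus is supported on the zeros of $c_1,\dots,c_{n-1}$, whose total degree is on the order of $2^{n-1}\deg(\gamma-c)$ -- the same order as $\deg a_n$. A bare comparison of $\deg a_n$ with $\#S$ therefore gives nothing; the naive degree-versus-ramification count is a wash, not a contradiction.

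What actually closes the gap is structural, not numerological: one must use the rigid-divisibility (primitive prime divisor) structure of the sequence $(c_n)$ to show that $c_n$ has ``new'' irreducible factors, of odd multiplicity, that do not divide any earlier $c_i$, and hence lie outside the ramification locus of $K_{n-1}/K$. Equivalently, working directly in $k(t)^*/(k(t)^*)^2$ via Lemma~\ref{2-ind} of the paper, one must show that no nontrivial product $\prod_{i\in I}c_i$ with $n\in I$ can be a square polynomial. This is where the unconditional function-field input (Zsygmondy-type / rigid-divisibility counting, or equivalently genus and rational-point bounds for the curves $y^2=\prod c_i^{\epsilon_i}$ over $k$) enters, and it is also where the explicit constant $17$ is extracted after ruling out low-level coincidences. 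You correctly flag that you have not derived $17$, but the deeper issue is that without the rigid-divisibility input your comparison of $\deg a_n$ to the ramification locus stalls even for a soft, nonexplicit threshold.
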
 
Moreover, assuming the Vojta conjecture for curves, an analogous statement holds for specializations within a one-dimensional family.
Specifically, let $\phi(x)=(x-\gamma(t))^2+c(t)$ for some polynomials $\gamma(t),c(t)\in\mathbb{Q}[t]$, and let \vspace{.075cm} 
\[\phi_a(x)=(x-\gamma(a))^2+c(a)\in\mathbb{Q}[x]\;\;\;\;\;\;\;\text{for}\; a\in\mathbb{Z}\vspace{.075cm}\]
be the specialization of $\phi$ at $a$. Then we have the following theorem: \vspace{.1cm} 
\begin{thm}[\cite{Me:GaloisRig}]{\label{rigid}} Suppose that $\phi(x)\in\mathbb{Q}(t)[x]$ is non-isotrivial. If $\phi(\gamma)\cdot \phi^2(\gamma)\neq0$ and the Vojta conjecture \cite[Conjecture 25.1]{Vojta} holds, then there is an integer $n_\phi$ such that \vspace{.075cm} 
\[G_{n_\phi}(\phi_a)=\Aut(T_{2,n_\phi})\;\;\; \text{implies}\;\;\;G_\infty(\phi_a)=\Aut(T_2)\vspace{.025cm},\]
for all $a\in\mathbb{Z}$.\vspace{.025cm} 
\end{thm}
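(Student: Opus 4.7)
The plan is to reduce surjectivity of $G_\infty(\phi_a)$ to non-squareness of certain critical-orbit quantities, and then apply Vojta's conjecture on the resulting family of hyperelliptic curves to bound the ``first failure level'' uniformly in $a$. By the discriminant criterion of Stoll and Jones, for $\phi(x)=(x-\gamma)^2+c\in\mathbb{Q}[x]$ there is a sequence of elements $\delta_n(\phi)\in\mathbb{Q}^\times/\mathbb{Q}^{\times 2}$, each expressible as an explicit polynomial in $\gamma,c,\phi(\gamma),\dots,\phi^n(\gamma)$, such that $G_n(\phi)=\Aut(T_{2,n})$ if and only if $\delta_i(\phi)\neq 1$ for every $i\leq n$, and $G_\infty(\phi)=\Aut(T_2)$ if and only if this holds for every $i$. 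The contrapositive of Theorem~\ref{rigid} then amounts to the following effective statement: if $\delta_n(\phi_a)=1$ for some $n$, then necessarily $\delta_i(\phi_a)=1$ for some $i\leq n_\phi$, uniformly in $a\in\mathbb{Z}$.

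For each $n$, I would form the affine curve $C_n: y^2=\delta_n(\phi_t)(t)$ obtained by specializing the one-parameter family. An integer $a$ with $\delta_n(\phi_a)=1$ yields a rational point $(a,y)\in C_n(\mathbb{Q})$. The hypotheses that $\phi$ is non-isotrivial and $\phi(\gamma)\cdot\phi^2(\gamma)\neq 0$ prevent the critical orbit of $\phi_t$ from becoming preperiodic in low iterations, so $\delta_n(\phi_t)$ acquires new geometric roots in $t$ as $n$ grows. A standard genus computation for the normalization of $C_n$ then yields $g_n:=\mathrm{genus}(C_n)\to\infty$.

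With $g_n\geq 2$, Vojta's conjecture applied to $C_n$ furnishes a bound on the height of rational points. Transporting this bound to the $t$-coordinate via the projection $C_n\to\mathbb{A}^1$, and comparing with the dynamical lower bound $h(\delta_n(\phi_a))\asymp 2^n\cdot h(a)$ coming from Call--Silverman-type canonical height estimates on polynomial iteration, one extracts a uniform upper bound $h(a)\leq B$ for every integer $a$ producing a rational point on $C_n$ with $n$ sufficiently large. Integers of bounded height form a finite set $S$, so only finitely many $a\in\mathbb{Z}$ can satisfy $\delta_n(\phi_a)=1$ at any level $n$ beyond some threshold $n_0(\phi)$. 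For each $a\in S$ for which $G_\infty(\phi_a)$ fails to be full, the first failure level $N(a)$ is a specific finite integer, and setting $n_\phi:=\max\bigl(n_0(\phi),\,\max_{a\in S}N(a)\bigr)$ gives the desired uniform bound.

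The main obstacle is matching the genus growth of $C_n$ to the Vojta height bound in a way that makes the resulting bound on $a$ genuinely independent of $n$. This requires the canonical-height constants on $C_n$ to grow at most polynomially in $n$ while the dynamical height of $\delta_n(\phi_a)$ grows like $2^n$, and rests on a careful accounting of the arithmetic complexity of the family $\{\delta_n(\phi_t)\}$. A closely related subtlety is the genus calculation itself: one must confirm that $\delta_n(\phi_t)$ remains squarefree in $t$ of sufficiently high degree for $n$ large, and this is precisely where the non-isotriviality and generic-critical-orbit hypothesis $\phi(\gamma)\phi^2(\gamma)\neq 0$ enter the argument in an essential way.
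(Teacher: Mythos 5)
Your overall architecture is consistent with the machinery developed in Section \ref{sec:Galois}: a first failure of surjectivity at level $n$ forces a square relation among values of the adjusted critical orbit, hence a rational (or integral) point on a hyperelliptic curve in the family (\ref{curvesfamilies}), and one then appeals to a Diophantine finiteness input. A minor correction first: there is no single class $\delta_n(\phi)\in\mathbb{Q}^\times/(\mathbb{Q}^\times)^2$ whose non-triviality for all $i\le n$ is equivalent to $G_n(\phi)=\Aut(T_{2,n})$. The criterion is $2$-independence (Theorem \ref{thm:surjective}), and by Corollary \ref{cor:curve} a first failure at level $n$ yields a square value of \emph{one} of the $2^{n-1}$ products $(-\phi(\gamma))^{\epsilon_1}\phi^2(\gamma)^{\epsilon_2}\cdots\phi^{n-1}(\gamma)^{\epsilon_{n-1}}\phi^n(\gamma)$ with $\epsilon\in\mathbb{F}_2^{n-1}$, so the curve on which your rational point sits is not canonically determined. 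This collection can be cut down, for example using the rigid-divisibility structure of the critical orbit as in Theorem \ref{thm:prime}, but that reduction has to be carried out (or the Vojta argument must be run over the whole $n$-dependent family of $\epsilon$-curves) before ``form a curve $C_n$'' is well defined.

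The substantive gap is in the Vojta step, and you have correctly flagged it without resolving it. Conjecture 25.1 applied to a \emph{fixed} curve $C_n$ of genus $\ge 2$ yields a height inequality whose exceptional set and additive constant depend on $(C_n, D, \mathcal A, \epsilon)$; the conjecture says nothing about how these constants behave as $n$ varies. Consequently the bound you extract has the form $h(a)\le B_n$, and the heuristic comparison against the dynamical growth $h(\phi^n(\gamma(a)))\asymp 2^n h(a)$ only produces a uniform $B$ if one additionally knows the Vojta constants grow sub-exponentially in $n$ --- but that is precisely an ``effective, family-uniform Vojta'' statement that Conjecture 25.1 does not supply, so the argument as written is circular at the crucial point. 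The standard device to restore uniformity in settings like this is to transport the point $(a,y_n)$ on the high-genus $C_n$ to an (integral) point on a \emph{fixed} low-dimensional variety attached to $\phi$ or $\phi^2$, exploiting the recursion $\phi^n(\gamma)=\phi^2(\phi^{n-2}(\gamma))$ together with the controlled gcd structure of the orbit values $\phi^i(\gamma(a))$, so that Vojta is invoked once on a single variety independent of $n$; the hypotheses that $\phi$ be non-isotrivial and that $\phi(\gamma)\phi^2(\gamma)\neq 0$ are exactly what prevent this fixed target from degenerating (and prevent orbit values from vanishing), putting it in the regime where Conjecture 25.1 applies. Absent a device of this kind, the conclusion that only finitely many $a\in\mathbb{Z}$ occur at \emph{any} level $n\ge n_0$ does not follow from the conjecture, and the final step of setting $n_\phi:=\max(n_0,\max_{a\in S}N(a))$ has no finite set $S$ to quantify over.
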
 
Roughly speaking, the Vojta conjecture predicts that if the full arboreal representation is not surjective for some $\phi_a$ and $a\in\mathbb{Z}$, then one should be able to detect this early. In this paper, we gather evidence for this philosophy for the simplest non-trivial family of quadratic polynomials, \[\phi_c(x)=x^2+c\;\; \text{for}\;\; c\in\mathbb{Z},\vspace{.075cm}\] 
at several basepoints. In particular, when taking preimages of $0$, we obtain:   \vspace{.1cm}
\begin{thm}{\label{thm:small5th}} Let $\phi_c(x)=x^2+c$ for some $c\in\mathbb{Z}$. Then the following statements hold: 
\begin{enumerate}[topsep=8pt, partopsep=8pt, itemsep=10pt] 
\item[\textup{(1)}] If $G_{3}(\phi_c)=\Aut(T_{2,3})$, then $G_{5}(\phi_c)=\Aut(T_{2,5})$. 
\item[\textup{(2)}] If $c\neq3$ and $G_{2}(\phi_c)=\Aut(T_{2,2})$, then $G_{5}(\phi_c)=\Aut(T_{2,5})$.
\end{enumerate}
That is, outside of the single counterexample $c=3$, Galois-maximality at the $2$nd stage of iteration implies maximality up to the $5$th stage.
\end{thm}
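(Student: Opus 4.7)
The plan is to invoke the standard square-testing criterion for arboreal representations of quadratic polynomials (cf.\ Stoll, Jones, and the survey \cite{Jonessurvey}): writing $c_i := \phi_c^i(0)$, and given $G_{n-1}(\phi_c)=\Aut(T_{2,n-1})$, the next-level maximality $G_n(\phi_c)=\Aut(T_{2,n})$ fails precisely when some product $c_n\cdot\prod_{i\in S} c_i$, for $S$ in an explicit finite family of subsets of $\{1,\ldots,n-1\}$, lies in $(\QQ^{\times})^2$. Since $c_i\in\ZZ[c]$ has degree $2^{i-1}$, each such failure condition defines a hyperelliptic curve $C_{n,S}:y^2=F_{n,S}(c)$ of controlled but growing genus, and the theorem reduces to determining the integer points on these finitely many curves.

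With this setup, part (1) proceeds by assuming $G_3(\phi_c)=\Aut(T_{2,3})$ and verifying that no integer $c$ lies on any of the level-$4$ or level-$5$ failure curves without already violating the hypothesis. Part (2) requires propagating through three levels of descent starting from $G_2$ maximal, and the exceptional point $c=3$ must appear as an integer point on one of the level-$3$ failure curves: indeed $c_1\,c_3=3\cdot 147=441=21^2$, witnessing the known collapse at level three. The curves arising at level $3$ and level $4$ should be tractable by more direct methods (elliptic Chabauty, two-cover descent, or reduction to low-genus hyperelliptic curves with small rank), so the bulk of the work lies at level $5$.

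The principal obstacle, as flagged in the abstract, is the genus-$7$ failure curve $C$ at level $5$ whose Jacobian has Mordell-Weil rank $5$. Since the Chabauty deficiency $g-r=2$ is positive, a single Coleman functional at one prime of good reduction does not pin $C(\QQ_p)$ down to a finite set. My plan for $C$ is: (i) produce explicit generators of $\Jac(C)(\QQ)$ modulo torsion by a two-descent, confirming rank exactly $5$; (ii) at a suitable small prime $p$ of good reduction, compute the two-dimensional space of Coleman $1$-forms annihilating the image of $\Jac(C)(\QQ)$ in $H^1_{\mathrm{dR}}(C_{\QQ_p})$; (iii) combine the resulting $p$-adic constraints on $C(\QQ_p)$ with a Mordell-Weil sieve across several auxiliary primes to reduce the problem to finitely many Mordell-Weil cosets; and (iv) verify that each surviving coset contains only the already-known rational points of $C$, all of whose $c$-coordinates are forbidden by the hypothesis that $G_3(\phi_c)$ (respectively $G_2(\phi_c)$ with $c\neq 3$) is maximal.

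Throughout, the careful bookkeeping lies in identifying which choices of the subset $S$ actually occur as obstructions at each level (using the fact that $\{c_1,\ldots,c_{n-1}\}$ generate a free subgroup of $K_{n-1}^{\times}/K_{n-1}^{\times 2}$ under the hypothesis $G_{n-1}$ maximal), so that one is not forced to analyze too many curves; and in checking, after each integer-point determination, that the exceptional $c$'s recovered really do satisfy (or fail) the hypotheses of (1) and (2) as asserted.
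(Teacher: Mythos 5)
Your overall framework—reduce failures of maximality to square relations among the $\phi_c^i(0)$, interpret those as integral points on hyperelliptic curves, and attack the level-$5$ curves with Chabauty--Coleman plus a Mordell--Weil sieve—is the same route the paper takes. But there is a genuine gap at the combinatorial bookkeeping stage. You acknowledge that "the careful bookkeeping lies in identifying which choices of the subset $S$ actually occur as obstructions," but the $2$-independence of $c_1,\dots,c_{n-1}$ in $K_{n-1}^{\times}/(K_{n-1}^{\times})^2$ does not eliminate any of the $2^{n-1}$ exponent vectors $\epsilon$ in Corollary~\ref{cor:curve}; naively one still faces $16$ curves at level $5$, some of genus as high as $15$, and the same combinatorial explosion at level $4$. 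The paper avoids this entirely via Theorem~\ref{thm:prime}, which exploits that $\Orb_{\phi}(0)$ is a rigid divisibility sequence (via Stoll's Theorem~1.7 and \cite{Rafe-Spencer}): for prime $n=p$, a newly small $G_p$ forces $\pm\phi_c^p(0)/c$ to be a square, so one only ever needs the two curves $y^2=\pm\,\phi_t^p(0)/t$, each of genus $7$. Without that reduction your plan is not wrong in principle, but it is not a feasible proof. Relatedly, you do not address levels $3$ and $4$ concretely; the paper imports these from \cite{Me:4th} rather than redoing elliptic-Chabauty calculations.

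Two smaller points. First, your sentence "Since the Chabauty deficiency $g-r=2$ is positive, a single Coleman functional at one prime of good reduction does not pin $C(\QQ_p)$ down to a finite set" has the logic backwards: $g-r>0$ is exactly when Chabauty--Coleman applies, and the zero set of any single nonzero annihilating $1$-form is already finite. The role of the $2$-dimensional space is to shrink the common zero locus, and in fact for the two genus-$7$ curves at level~$5$ the paper determines $\mathcal{C}_i(\QQ)$ by Coleman integration alone (the Mordell--Weil sieve is needed only for some genus-$3$ curves in the $b=1$ case). Second, your check for $c=3$ is essentially correct—$c_3/c_1 = 49$ is a square, matching the point $(3,7)$ on $\mathcal{C}_3: y^2=\phi_t^3(0)/t$—but note that in the paper's conventions (where the first entry of the adjusted post-critical set is $-\phi(\gamma)$) the witnessing relation at $c=3$ is $\phi^2(0)\cdot\phi^3(0)=1764=42^2$, not $-\phi(0)\cdot\phi^3(0)$, so one has to be careful which exponent vector is involved.
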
 
\begin{rmk} If $c=3$, then $G_{2}(\phi_c)=\Aut(T_{2,2})$ and $G_{3}(\phi_c)\leq\Aut(T_{2,3})$ has index $2$. 
\end{rmk}  
Furthermore, since $\Aut(T_{2,2})\cong D_4$ is a small concrete group, we can make Theorem \ref{thm:small5th} completely explicit: 
\begin{cor}{\label{cor:5th}} If $c\in\mathbb{Z}\mysetminus\{3\}$ is such that $-c$ and $-(c+1)$ are non-squares in $\mathbb{Z}$, then $G_{5}(\phi_c)=\Aut(T_{2,5})$.
\end{cor}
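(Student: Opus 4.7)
The plan is to reduce Corollary \ref{cor:5th} directly to Theorem \ref{thm:small5th}(2): since the hypothesis already excludes $c=3$, it suffices to verify that $G_2(\phi_c)=\Aut(T_{2,2})$ whenever $-c$ and $-(c+1)$ are non-squares in $\mathbb{Z}$. Because $\Aut(T_{2,2})\cong D_4$ has order $8$, this reduces to a purely algebraic check at the level of quadratic subfields.

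Set $K_1:=\QQ(\sqrt{-c})$. Maximality of $G_2(\phi_c)$ has two components: first, irreducibility of $\phi_c(x)=x^2+c$, which is equivalent to $-c\notin\QQ^{*2}$ and yields $G_1(\phi_c)=\mathbb{Z}/2\mathbb{Z}$; and second, that the extension $K_2/K_1$ generated by the roots of $\phi_c^2(x)$ has degree $4$. Since those roots are $\pm\sqrt{-c\pm\sqrt{-c}}$, one has $K_2=K_1(\sqrt{\alpha},\sqrt{\beta})$ with $\alpha:=-c+\sqrt{-c}$ and $\beta:=-c-\sqrt{-c}$. By Kummer theory, $[K_2:K_1]=4$ if and only if each of $\alpha$, $\beta$, and $\alpha\beta$ is a non-square in $K_1$.

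I would verify these three conditions in turn. The norm $N_{K_1/\QQ}(\alpha)=\alpha\beta=c^2+c=c(c+1)$, so the key element is $c(c+1)$. Since $-c\notin\QQ^{*2}$ forces $c\notin\{0,-1\}$ and $\gcd(c,c+1)=1$, the product $c(c+1)$ is not a perfect square in $\QQ$: two coprime integers whose product is a square must themselves be squares up to sign, which is impossible for nontrivial consecutive integers. Next, any rational number that becomes a square in $K_1$ must equal either a rational square or $-c$ times one; hence $c(c+1)\in K_1^{*2}$ would force $-c^2(c+1)$, and so $-(c+1)$, to lie in $\QQ^{*2}$, contradicting the hypothesis. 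Therefore $\alpha\beta\notin K_1^{*2}$. Finally, if $\alpha$ were a square in $K_1$ then so would be its Galois conjugate $\beta$, forcing $\alpha\beta\in\QQ^{*2}$, contradiction. Thus $\alpha$, $\beta$, $\alpha\beta$ are all non-squares in $K_1$, so $G_2(\phi_c)=\Aut(T_{2,2})$, and Theorem \ref{thm:small5th}(2) finishes the argument.

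I anticipate no real obstacle here: the two non-squareness hypotheses are tailored precisely to rule out the two independent ways that $\alpha\beta$ could become a square in the quadratic field $K_1$, and the substantive content of the corollary lies upstream in Theorem \ref{thm:small5th}(2) itself.
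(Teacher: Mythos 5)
Your proof is correct and follows the route the paper intends: the corollary is a direct application of Theorem~\ref{thm:small5th}(2) once one verifies that the two non-squareness hypotheses force $G_2(\phi_c)=\Aut(T_{2,2})$. You rederive the relevant Kummer-theoretic condition by hand; the same verification is slightly shorter using Theorem~\ref{thm:surjective} with $n=2$, which says $G_2(\phi_c)=\Aut(T_{2,2})$ iff $c_1=-c$ and $c_2=c^2+c$ are $2$-independent in $\QQ^*/(\QQ^*)^2$, i.e. $-c$, $c(c+1)$, and $-c^2(c+1)\sim -(c+1)$ are all non-squares, with the middle condition automatic for nonzero consecutive integers as you observe.
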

\begin{rmk} Note that $\#\Aut(T_{2,5})=2^{2^5-1}=2147483648$. Hence, it is difficult to compute $G_5(\phi_c)$ directly for large values of $c\in\mathbb{Z}$, underscoring the usefulness of Corollary \ref{cor:5th}.  
\end{rmk} 
Moreover, since a point search on the relevant curves associated to larger iterates yields no unknown rational points (coupled with Theorems \ref{rigid} and \ref{thm:small5th}) we conjecture that $n_\phi=3$ for this particular family:   
\begin{conj} Let $\phi_c(x)=x^2+c$,\; $c\in\mathbb{Z}$. If $G_3(\phi_c)=\Aut(T_{2,3})$ then $G_\infty(\phi_c)=\Aut(T_2)$. 
\end{conj}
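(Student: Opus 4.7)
Since the statement is a conjecture, what follows is an attack plan rather than a proof sketch. The plan is to combine the bootstrapping theorem already proved in the paper with an inductive study of the ``Galois-obstruction curves'' at each subsequent level of the preimage tree, and then to show that none of these curves contributes new integer values of $c$ beyond those already excluded by the hypothesis $G_3(\phi_c)=\Aut(T_{2,3})$.

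First, I would apply Theorem \ref{thm:small5th}(1) to upgrade the hypothesis $G_3(\phi_c)=\Aut(T_{2,3})$ to $G_5(\phi_c)=\Aut(T_{2,5})$. It then remains to prove, for every $n\geq 6$, the implication
\[
G_{n-1}(\phi_c)=\Aut(T_{2,n-1}) \;\Longrightarrow\; G_n(\phi_c)=\Aut(T_{2,n}) \qquad (c\in\mathbb{Z}).
\]
For this I would invoke the standard square-obstruction criterion for maximality of arboreal Galois groups attached to $\phi_c(x)=x^2+c$ with basepoint $0$: given maximality at level $n-1$, failure of maximality at level $n$ is detected by an explicit element of the critical orbit becoming a square in the $(n-1)$st preimage field. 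Taking norms down to $\mathbb{Q}$ and collecting discriminants produces an affine hyperelliptic curve $C_n/\mathbb{Q}$, of the shape $y^2 = f_n(c)$, whose integer points parametrize exactly the $c$ for which the implication fails; $f_n$ is built from iterated resultants of $\phi_c^k(0)$. Theorem \ref{thm:small5th}, together with the genus-$7$ calculation highlighted in the abstract, is precisely the $n=4,5$ instance of this setup.

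Next, for each $n\geq 6$ I would attempt to determine $C_n(\mathbb{Q})$ via the same package used throughout the paper: compute $\Jac(C_n)(\mathbb{Q})$ by descent, then apply Chabauty-Coleman (including its extensions to the case where the Mordell-Weil rank equals or exceeds the genus) together with the Mordell-Weil sieve. The empirical point search performed in the paper, which finds no unexpected rational points at larger iterates, is exactly what motivates the conjecture and suggests that this induction should close with only the ``trivial'' $c$-values, all of which are already forbidden by the hypothesis on $G_3$.

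The main obstacle is uniformity. The genus of $C_n$ grows like $2^{n-2}$, and the Mordell-Weil rank of $\Jac(C_n)$ appears empirically to grow along with it, so there is no hope of processing the infinite family $\{C_n\}_{n\geq 6}$ one curve at a time with Chabauty-Coleman. An unconditional proof therefore seems to require either (i) a geometric relation linking $C_n$ to $C_{n-1}$ under which rational points propagate down the tower, so that the entire family can be controlled by finitely many curves, or (ii) a strong Diophantine input such as the Vojta conjecture, which via Theorem \ref{rigid} produces some $n_\phi$, together with a separate argument that for the one-parameter family $c\mapsto \phi_c$ one always has $n_\phi\leq 3$. Establishing either mechanism is, to my mind, the crux of the conjecture.
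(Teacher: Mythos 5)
The statement you are asked to prove is a conjecture; the paper does not prove it, and offers only the evidence summarized just before the statement (Theorem~\ref{thm:small5th}, a computational point search on higher-level curves, and the conditional rigidity of Theorem~\ref{rigid}). You correctly identify this, and your attack plan is faithful to the machinery the paper actually develops: reduce to $n\geq 6$ via Theorem~\ref{thm:small5th}(1), translate a newly non-maximal Galois group at level $n$ into a rational point on a Galois-obstruction curve (Corollary~\ref{cor:curve}), and attempt to determine those rational points by descent, Chabauty--Coleman, and the Mordell--Weil sieve. Your diagnosis of the crux --- that genus and Mordell--Weil rank grow with $n$, so there is no hope of closing the induction one curve at a time, and one needs either a geometric propagation mechanism down the tower or Vojta plus an effective bound $n_\phi\leq 3$ --- is exactly the unresolved difficulty, and matches the discussion surrounding Theorem~\ref{rigid}.

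Two imprecisions worth flagging. First, Corollary~\ref{cor:curve} does not produce a single curve $C_n$ but the family $C_{\phi,n}^{(\epsilon)}$ indexed by $\epsilon\in\mathbb{F}_2^{n-1}$, so the number of curves at level $n$ grows like $2^{n-1}$ in addition to the genus growth; any uniformity argument must address this. The paper mitigates it for prime $n=p$ via Theorem~\ref{thm:prime}, which exploits the rigid divisibility of the orbit $\{\phi_t^n(0)\}$ to collapse the $2^{p-1}$ obstruction curves down to the two curves $\mathcal{C}_p^{(\pm 1)}: \pm y^2 = \phi_t^p(0)/t$; your plan should incorporate this reduction (and note it is currently available only for prime $n$). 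Second, the curves are built not from iterated resultants but directly from partial products $\prod \phi_t^k(0)^{\epsilon_k}$, and for basepoint $0$ these factors share common divisors, so one must first pass to a nonsingular model by dividing out repeated factors --- a step the paper handles explicitly in Section~\ref{sec:rationalpoints2} and that should appear in any careful inductive setup.
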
 
\begin{rmk} It follows from the main results of \cite{Stoll-Galois} that $G_1(\phi_c)=\Aut(T_{2,1})$ already implies $G_\infty(\phi_c)=\Aut(T_2)$ for $c\in\mathbb{Z}$ in certain congruence classes (with parity conditions).   
\end{rmk} 
Likewise, we obtain an analogous result (up to the $4$th iterate) for rational values $c\in\mathbb{Q}$ when we instead take preimages of the basepoint $b=1$: \vspace{.15cm}         
\begin{thm}{\label{thm:4th}} Let $\phi_c(x)=x^2+c$ for some $c\in\mathbb{Q}$. Then the following statements hold: 
\begin{enumerate}[topsep=8pt, partopsep=8pt, itemsep=10pt] 
\item[\textup{(1)}]  If $c\in\mathbb{Q}\mysetminus\big\{2, \frac{-4\,}{\;3}\big\}$ and $G_3(\phi_c,1)=\Aut(T_{2,3})$, then $G_4(\phi_c,1)=\Aut(T_{2,4})$. 
\item[\textup{(2)}]  If $c\in\mathbb{Z}\mysetminus\{-1,2\}$ and $G_2(\phi_c,1)=\Aut(T_{2,2})$, then $G_4(\phi_c,1)=\Aut(T_{2,4})$. \vspace{.15cm}     
\end{enumerate} 
\end{thm}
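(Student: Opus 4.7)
The plan is to follow the Chabauty-Coleman and Mordell-Weil sieve strategy used in Theorem~\ref{thm:small5th}, reducing each implication to finding all rational (resp.\ integer) points on a finite explicit collection of hyperelliptic curves over $\mathbb{Q}$.

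The starting point is the standard square-obstruction criterion (due to Stoll and Jones): under the inductive hypothesis $G_{n-1}(\phi_c,1) = \Aut(T_{2,n-1})$, one has $G_n(\phi_c,1) = \Aut(T_{2,n})$ if and only if the adjusted post-critical value $\phi_c^n(0)-1$ fails to be a square in $K_{n-1} := \mathbb{Q}\bigl(T_{n-1}(\phi_c,1)\bigr)$. Since $G_{n-1}$ is the full wreath product, squareness in $K_{n-1}$ is governed by its multiquadratic subfield generated by the $\sqrt{\phi_c^j(0)-1}$ for $1\le j\le n-1$, so the obstruction is equivalent to the existence of some $S\subseteq\{1,\dots,n-1\}$ satisfying
\[
(\phi_c^n(0)-1)\cdot\prod_{j\in S}\bigl(\phi_c^j(0)-1\bigr)\;\in\;\mathbb{Q}^{*2}.
\]
Each such $S$ defines a hyperelliptic curve $C_{n,S}\colon y^2 = (\phi_c^n(0)-1)\prod_{j\in S}(\phi_c^j(0)-1)$, and the task becomes to classify all rational points on the union of the $C_{n,S}$.

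For part~(1), apply this with $n=4$: one obtains $2^3=8$ curves, and since $\deg_c(\phi_c^j(0)-1) = 2^{j-1}$, the $C_{4,S}$ range over degrees $8$ through $15$ and genera $3$ through $7$. For each, I would bound the Mordell-Weil rank of the Jacobian via an explicit $2$-descent, then apply Chabauty-Coleman whenever the rank is strictly below the genus, or the Mordell-Weil sieve otherwise; the resulting rational points should be seen to correspond to $c\in\{2,-4/3\}$. For part~(2), the same analysis handles the extra jump $G_2\Rightarrow G_3$, yielding four curves $C_{3,S}$ of degrees $4$--$7$ and genera $1$--$3$, where the integer hypothesis allows a local-at-a-prime search to isolate $c\in\{-1,2\}$. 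Combined with part~(1), this gives part~(2).

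The chief obstacle will be the genus-$7$ curve $C_{4,\{1,2,3\}}$, analogous to the genus-$7$ curve highlighted in the abstract for Theorem~\ref{thm:small5th}: if its Jacobian has Mordell-Weil rank strictly less than $7$, then Chabauty-Coleman at a suitable prime should suffice, possibly combined with the Mordell-Weil sieve to eliminate residual $p$-adic points; in the worst case one combines Chabauty at several primes with the sieve. Once this curve is under control, the remaining $C_{n,S}$ yield to standard descent-and-Chabauty arguments, and a direct substitution confirms that the exceptional locus is exactly as listed.
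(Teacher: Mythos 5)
Your high-level reduction to square-obstruction curves is correct (modulo a sign: the first adjusted post-critical value is $c_1=-\phi(\gamma)$, so the $j=1$ factor should be $-\bigl(\phi_c(0)-1\bigr)=1-c$, not $\phi_c(0)-1$; equivalently, the curves are $y^2=(-t)^{\epsilon_1}\rho^2(-1)^{\epsilon_2}\cdots$, and forgetting this sign changes which twists appear). However, you are missing the key step that makes the computation tractable, and as a result you are proposing to do far more (and far harder) work than the paper actually does.

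The paper never touches the genus-$7$ curve $C_{4,\{1,2,3\}}$ or the other genus $4$--$6$ curves in your list. Instead it uses a twisted-product decomposition: writing the obstruction as $y^2 = f(t)\cdot g(t)$ with $g(t)=\rho^4(-1)$ the top iterate and $f(t)$ the product of lower iterates, the rational points on this curve are covered by rational points on $X^{(d)}:\{du^2=f(t),\ dv^2=g(t)\}$, where $d$ ranges over the (finitely many) square-free integers supported on $\Res(f,g)$. Since the second equation $dv^2=g(t)$ already has finitely many rational points, the first equation can be ignored outright. Computing the resultants $\Res(\rho^j(-1),\rho^4(-1))\in\{-1,-5,25\}$ for $1\le j\le3$ and $\Res(\rho^j(-1),\rho^3(-1))\in\{-1,4\}$ for $1\le j\le2$, the entire theorem collapses to the integral points on four quartic elliptic curves $E^{(d)}:dy^2=\rho^3(-1)$, $d\in\{\pm1,\pm2\}$ (handled directly by \texttt{Magma}), plus the rational points on four genus-$3$ curves $F^{(d)}:dy^2=\rho^4(-1)$, $d\in\{\pm1,\pm5\}$, which are the content of Lemmas \ref{lem:d=1}--\ref{lem:d=-5} and are done via $2$-descent $+$ Chabauty--Coleman ($+$ Mordell--Weil sieve for $d=\pm5$). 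Your plan of running Chabauty directly on all eight $C_{4,S}$ (including the genus-$7$ one) would require verifying rank $<$ genus for each, constructing explicit Mordell--Weil generators, and performing Coleman integration on curves with no known structure, with no guarantee of success; the resultant trick replaces all of this with a small, uniform set of genus-$3$ computations. This is not a cosmetic optimization but the central idea that makes Theorem~\ref{thm:4th} provable by these methods.
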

\begin{rmk} If $c= -1$,  then $G_{2}(\phi_c,1)=\Aut(T_{2,2})$ and $G_{3}(\phi_c,1)\leq\Aut(T_{2,3})$ has index $2$. Similarly, when $c\in\{2,-4/3\}$, then $G_{3}(\phi_c,1)=\Aut(T_{2,3})$ and $G_{4}(\phi_c,1)\leq\Aut(T_{2,4})$ have index $4$.  
\end{rmk} 
To prove Theorem \ref{thm:small5th} and Theorem \ref{thm:4th}, one must determine a complete set of rational (or integral) points for several hyperelliptic curves of moderate genus. This requires quite a bit of work, since each of the relevant curves has a non-trivial set of such points, due to the existence of post-critically finite polynomials (which are known to have non-surjective arboreal representations; see \cite[Theorem 3.1]{Jonessurvey}). As such, we rely heavily upon the computer algebra systems \texttt{Magma} and \texttt{Sage} throughout this paper; scripts providing the computational details for our work can be found in \texttt{Gal_Sml_Iter} at:
\\[8 pt]  
\indent\indent\indent\indent\indent\indent \boxed{\text{\url{https://sites.google.com/a/alumni.brown.edu/whindes/research}}  \vspace{.15cm}}.  
\\[8 pt]
Finally, in section \ref{sec:irre} we discuss the possibility that irreducibility is a ``rigid" property of quadratic arboreal representations, i.e whether there is a \emph{uniform} iterate $N\geq1$ such that:
\[\text{If $\phi^N$ is irreducible over $\QQ$, then all iterates of $\phi$ are irreducible over $\QQ$;}\] 
here $\phi(x)\in\mathbb{Z}[x]$ is any monic, quadratic polynomial. 
\\[7 pt]
\indent \textbf{Acknowledgements:} It is a pleasure to thank Michael Stoll for his help with several computations within this paper, especially the Mordell-Weil sieve calculation in Lemma \ref{lem:d=5} and the Coleman integrals in Lemma \ref{lem:5th2}. We also thank the anonymous referee for their helpful comments.        
\section{Relating Galois Groups and Rational Points}{\label{sec:Galois}} 
We begin by showing how one passes from information about Galois groups of iterates of quadratic polynomials to rational points on curves. In what follows, $K$ is a global field of characteristic not $2$. The main idea is the following: write $\phi(x)=(x-\gamma)^2+c$ for some $\gamma,c\in K$. Then $G_n(\phi)=\Aut(T_{2,n})$, unless there is a multiplicative dependence relation, modulo squares, in the \emph{adjusted, post-critical set}:  
\begin{equation}{\label{post-crit}}
\{-\phi(\gamma),\phi^2(\gamma), \phi^3(\gamma),\dots, \phi^n(\gamma)\}.
\end{equation}  
It is for this reason that the curves of interest are hyperelliptic. This fact, known thus far only for the polynomials $\phi(x)=x^2+c$,  is a consequence of a few separate results (all generalizations of those in \cite[\S1.]{Stoll-Galois}), the first of which we state from \cite[Proposition 4.2]{Rafe:LMS}.  
\begin{prop}{\label{lem:irre}} Let $\phi(x)=(x-\gamma)^2+c$ be a quadratic polynomial defined over $K$. If the adjusted, post-critical set in \textup{(\ref{post-crit})} contains no squares in $K$, then $\phi^n$ is irreducible over $K$.    
\end{prop}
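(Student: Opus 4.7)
The plan is to induct on $n$. For the base case $n = 1$, the quadratic formula gives roots $\gamma \pm \sqrt{-c} = \gamma \pm \sqrt{-\phi(\gamma)}$, so $\phi$ is irreducible over $K$ precisely when $-\phi(\gamma)$ is not a square in $K$, which is exactly the hypothesis.

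For the inductive step, fix $n \geq 2$. Since the hypothesis at level $n$ contains the hypothesis at level $n-1$, induction gives that $\phi^{n-1}$ is irreducible over $K$. Pick a root $\alpha$ of $\phi^{n-1}$ in $\overline{K}$ and set $L = K(\alpha)$, so $[L:K] = 2^{n-1}$. Using $\phi^n(x) = \phi^{n-1}(\phi(x))$, the roots of $\phi^n$ are the $\phi$-preimages of the $2^{n-1}$ roots of $\phi^{n-1}$, and $\Gal(\overline{K}/K)$ already acts transitively on the latter. Consequently, $\phi^n$ is irreducible over $K$ if and only if, above the fixed root $\alpha$, the two $\phi$-preimages lie in a single Galois orbit --- equivalently, the quadratic $\phi(x) - \alpha = (x-\gamma)^2 - (\alpha - c)$ is irreducible over $L$, i.e.\ $\alpha - c$ is a non-square in $L$.

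To deduce this from the hypothesis on the post-critical set, I would compute the norm
\[
\N_{L/K}(\alpha - c) \;=\; \prod_{i=1}^{2^{n-1}}(\alpha_i - c) \;=\; (-1)^{2^{n-1}}\,\phi^{n-1}(c) \;=\; \phi^{n-1}\bigl(\phi(\gamma)\bigr) \;=\; \phi^n(\gamma),
\]
where the $\alpha_i$ are the Galois conjugates of $\alpha$ (equivalently, the roots of the monic polynomial $\phi^{n-1}$) and $2^{n-1}$ is even for $n \geq 2$. Were $\alpha - c$ a square in $L$, its norm $\phi^n(\gamma)$ would be a square in $K$, contradicting the hypothesis; hence $\phi^n$ is irreducible. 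The delicate point is the reduction in the previous paragraph --- that irreducibility of $\phi^n$ over $K$ is equivalent to $\alpha - c$ being a non-square in $L$ --- which requires understanding how $\Gal(\overline{K}/K)$ permutes the $n$-th level of the preimage tree either as one orbit of size $2^n$ or as two orbits of size $2^{n-1}$, indexed by the two square roots of $\alpha - c$. Once this local-to-global reformulation is in hand, the norm computation finishes the argument.
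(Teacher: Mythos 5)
Your proof is correct and is essentially the standard argument for this result; the paper does not supply its own proof but simply cites \cite[Proposition 4.2]{Rafe:LMS}, whose proof proceeds exactly as you do: reduce irreducibility of $\phi^n$ over $K$ to irreducibility of $\phi(x)-\alpha$ over $K(\alpha)$ for a root $\alpha$ of $\phi^{n-1}$, and then note that $\N_{K(\alpha)/K}(\alpha-c)=\phi^n(\gamma)$, so a square root of $\alpha - c$ in $K(\alpha)$ would force $\phi^n(\gamma)$ to be a square in $K$.
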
 
\begin{rmk} Note that if $\phi^m(x)=f(x)\cdot g(x)$, then $\phi^{m+1}(x)=f(\phi(x))\cdot g(\phi(x))$; in particular, it follows that if $\phi^n(x)$ is irreducible, then all lower order iterates are also irreducible. 
\end{rmk}
Let $K_n(\phi)=K(\phi^{-n}(0))$ be the splitting field over $K$ of $\phi^n(x)$ for $n\geq1$. Then it is straightforward to verify that $K_n(\phi)/K_{n-1}(\phi)$ is the compositum of at most $\deg(\phi^{n-1})=2^{n-1}$ quadratic extensions; see \cite[Fact 1]{Stoll-Galois}. Thus \vspace{.025cm}
\[H_n(\phi):=\Gal(K_n(\phi)/K_{n-1}(\phi))\cong(\mathbb{Z}/2\mathbb{Z})^m\vspace{.07cm}\] 
for some $0\leq m\leq 2^{n-1}$, and we say that $H_n(\phi)$ is \emph{maximal} if $m=2^{n-1}$ is as large as possible. The following maximality criterion (cf. \cite[Lemma 1.6]{Stoll-Galois}) can be found in \cite[Lemma 3.2]{Rafe:LMS} in full generality.  
\begin{lem}{\label{lem:Stoll}} Let $\phi(x)=(x-\gamma)^2+c$ be a quadratic polynomial defined over $K$, and suppose that $\phi^n$ is irreducible over $K$ for all $n\geq1$. Then for $n\geq2$, $H_n(\phi)$ is maximal if and only if $\phi^n(\gamma)$ is not a square in $K_{n-1}(\phi)$.    
\end{lem}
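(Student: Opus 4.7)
The plan is to translate maximality of $H_n(\phi)$ into a Kummer-theoretic statement about square classes in $L:=K_{n-1}(\phi)$. Since a root $\alpha$ of $\phi^{n-1}$ has two preimages $\gamma\pm\sqrt{\alpha-c}$ under $\phi$, one has
\[K_n(\phi)=L\bigl(\sqrt{\alpha-c}:\alpha\in\phi^{-(n-1)}(0)\bigr),\]
and Kummer theory identifies $H_n(\phi)$ with the $\mathbb{F}_2$-dual of the subgroup $W_n\subseteq L^*/(L^*)^2$ generated by the square classes of $\alpha-c$. Thus $H_n(\phi)$ is maximal if and only if $\dim_{\mathbb{F}_2}W_n=2^{n-1}$, i.e. if and only if the $\{\alpha-c\bmod (L^*)^2\}$ satisfy no nontrivial $\mathbb{F}_2$-linear relation.

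One direction is then immediate: since $\phi^{n-1}$ is monic with root set $\{\alpha\}$ of even size $2^{n-1}$ and $\phi(\gamma)=c$, a short computation gives
\[\prod_\alpha(\alpha-c)=\phi^{n-1}(c)=\phi^{n-1}(\phi(\gamma))=\phi^n(\gamma),\]
so if $\phi^n(\gamma)\in(L^*)^2$, the ``total'' relation $\sum_\alpha[\alpha-c]=0$ is nontrivial and $H_n(\phi)$ fails to be maximal. For the harder converse I would assume a nontrivial squaring relation $\prod_{\alpha\in S}(\alpha-c)=w^2$ with $\emptyset\ne S\subseteq\phi^{-(n-1)}(0)$ and argue $S=\phi^{-(n-1)}(0)$, which forces $\phi^n(\gamma)$ to be a square. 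Galois-conjugating by $G_{n-1}(\phi)$, which acts transitively on $\phi^{-(n-1)}(0)$ by irreducibility of $\phi^{n-1}$, shows that the collection of such ``squaring subsets'' forms a $G_{n-1}(\phi)$-invariant $\mathbb{F}_2$-subspace of $\mathbb{F}_2^{\phi^{-(n-1)}(0)}$, and the goal becomes to show any nonzero such subspace must contain the all-ones vector $\vec 1$.

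To carry out the reduction I would induct on $n$, with $n=2$ handled directly: the two roots $\alpha_1,\alpha_2$ of $\phi$ are Galois-conjugate over $K$, so if $\alpha_1-c\in (L^*)^2$ then so is $\alpha_2-c$, and the only possible nontrivial relation is the product $(\alpha_1-c)(\alpha_2-c)=\phi^2(\gamma)$. For the inductive step I would exploit the pair structure of the preimage tree: the roots of $\phi^{n-1}$ fiber into pairs $\{\alpha_+(\beta),\alpha_-(\beta)\}$ over roots $\beta$ of $\phi^{n-2}$, satisfying the key identity $(\alpha_+(\beta)-c)(\alpha_-(\beta)-c)=\phi^2(\gamma)-\beta\in K_{n-2}(\phi)$. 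If $S$ is not closed under pairing, combining it with its conjugate by a suitable pair-swap element of $H_{n-1}(\phi)$ and taking symmetric differences isolates a squaring relation supported on a single pair, hence on an element of $K_{n-2}(\phi)$, reducing to the analogous problem one level down; if $S$ is closed under pairing, the relation descends directly to one among the products $\phi^2(\gamma)-\beta$ in $K_{n-2}(\phi)^*/(K_{n-2}(\phi)^*)^2$. Proposition~\ref{lem:irre} and the irreducibility of every $\phi^m$ are needed to exclude spurious squarenesses at each step of the descent.

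The main obstacle I foresee is controlling precisely which $G_{n-1}(\phi)$-invariant subspaces of $\mathbb{F}_2^{\phi^{-(n-1)}(0)}$ can actually consist entirely of squaring relations: one must show the recursive, wreath-product-type action of $G_{n-1}(\phi)$, together with irreducibility of every iterate, forces any such invariant subspace to be either $\{0\}$ or $\langle\vec 1\rangle$. This is where the assumption on \emph{all} iterates (rather than just $\phi^{n-1}$) is essential, since it supplies enough Galois action at every level of the preimage tree for the pair-swap and symmetric-difference maneuvers to close the induction.
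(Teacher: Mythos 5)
Your setup is exactly right and matches the structure of the standard argument (Stoll's Lemma~1.6, generalized in Jones's Lemma~3.2): pass to Kummer theory over $L=K_{n-1}(\phi)$, identify maximality of $H_n(\phi)$ with $\mathbb{F}_2$-independence of the classes $[\alpha-c]$, observe that the set $V$ of ``squaring subsets'' $S\subseteq\phi^{-(n-1)}(0)$ is a $G_{n-1}(\phi)$-stable $\mathbb{F}_2$-subspace of $\mathbb{F}_2^{\phi^{-(n-1)}(0)}$, and note that the computation $\prod_\alpha(\alpha-c)=\phi^{n-1}(c)=\phi^n(\gamma)$ handles one direction. The reduction you state --- that it suffices to show any nonzero $G_{n-1}$-invariant subspace of $\mathbb{F}_2^{\phi^{-(n-1)}(0)}$ contains $\vec 1$ --- is also exactly the right target.

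The gap is in how you try to close that reduction. Your inductive step relies on ``a suitable pair-swap element of $H_{n-1}(\phi)$'' to take a symmetric difference and isolate a relation supported on a single pair. But $H_{n-1}(\phi)$ is a $G_{n-2}(\phi)$-invariant subgroup of $(\mathbb{Z}/2\mathbb{Z})^{\phi^{-(n-2)}(0)}$, and by transitivity of $G_{n-2}(\phi)$ (irreducibility of $\phi^{n-2}$), if it contains one single-pair transposition it contains all of them, i.e.\ $H_{n-1}(\phi)$ is \emph{maximal}. The lemma's hypothesis is only irreducibility of the iterates, which gives transitivity at each level but does not force $H_{n-1}$ to be maximal (indeed the whole point of the lemma is to give a criterion for when $H_n$ is maximal). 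So the pair-swap element you need may simply not exist; your contingency that ``irreducibility of every $\phi^m$ supplies enough Galois action'' conflates transitivity with size of the level subgroups. The ``closed under pairing'' branch also leaves a loose end: the witness $w$ with $\prod_{\beta\in T}(\phi^2(\gamma)-\beta)=w^2$ lives a priori in $K_{n-1}$, not in $K_{n-2}$, so the relation does not descend to a statement of the same shape one level down, and the products $\phi^2(\gamma)-\beta$ are not of the form $\alpha'-c$ for a root $\alpha'$ of $\phi^{n-2}$, so the inductive hypothesis as stated does not apply.

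The missing idea is short. The group $G_{n-1}(\phi)$ embeds in $\Aut(T_{2,n-1})$, which has order $2^{2^{n-1}-1}$, so $G_{n-1}(\phi)$ is a $2$-group. A finite $p$-group acting on a nonzero $\mathbb{F}_p$-vector space always has a nonzero fixed vector (the group algebra $\mathbb{F}_p[G]$ is local and its only simple module is trivial). Applying this with $p=2$ to the nonzero invariant subspace $V$ gives a nonzero $G_{n-1}$-fixed vector in $V$; since $G_{n-1}(\phi)$ acts transitively on $\phi^{-(n-1)}(0)$ (irreducibility of $\phi^{n-1}$), the only fixed vectors in $\mathbb{F}_2^{\phi^{-(n-1)}(0)}$ are $0$ and $\vec 1$, so $\vec 1\in V$ and $\phi^n(\gamma)$ is a square in $K_{n-1}(\phi)$. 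This finishes the converse without any maximality assumption on lower levels and without the pairwise combinatorics; the only irreducibility actually used is that of $\phi^{n-1}$ (for transitivity) and of $\phi^n$ (to ensure each $\alpha-c$ is nonzero so Kummer theory applies).
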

It is well known that $G_n(\phi)=\Aut(T_{2,n})$ if and only if $G_{n-1}(\phi)=\Aut(T_{2,n-1})$ and $H_n(\phi)$ is maximal; see, for instance, \cite[Lemma 1.4]{Stoll-Galois}. Therefore, in light of Lemma \ref{lem:Stoll}, we need to test whether $\phi^n(\gamma)$ is a square in $K_{n-1}(\phi)$ for all $n$, to determine if the arboreal representation is surjective. To do this, we generalize \cite[Lemma 1.5]{Stoll-Galois}. However, in order to provide a clean statement, we fix some notation. 
\begin{definition} Set $c_1=-\phi(\gamma)$ and $c_n=\phi^n(\gamma)$ for $n\geq 2$. If $c_m\in K^*$ for all $m\geq1$, then we say that $c_1,c_2,\dots c_n$ are \emph{2-independent}, if their residue classes in the $\mathbb{F}_2$-vector space $K^*/(K^*)^2$ are linearly independent (equivalently, they generate a subspace of dimension $n$). Moreover, if some $c_i=0$, then we say that $c_1,c_2,\dots c_n$ are not $2$-independent. 
\end{definition}
With this definition in place, we can state the following criterion for determining whether or not an element $a\in K^*$ is a square in the splitting field $K_m$. Although it is only a minor generalization of \cite[Lemma 1.5]{Stoll-Galois}, we provide a proof here, since it does not seem to appear elsewhere in the literature.  
\begin{lem}{\label{2-ind}} If $G_m(\phi)=\Aut(T_m)$ and $c_1,c_2,\dots c_m$ are 2-independent, then for all $a\in K^*$ \textup{:} 
\[ a\not\in (K_m^*)^2 \Longleftrightarrow c_1,c_2,\dots, c_m, a\; \text{are $2$-independent}.\] 
\end{lem}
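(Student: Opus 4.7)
The plan is to reformulate the claim in terms of the kernel
\[
N := \ker\bigl(K^*/(K^*)^2 \to K_m^*/(K_m^*)^2\bigr)
\]
and the $\mathbb{F}_2$-subspace $V \subseteq K^*/(K^*)^2$ spanned by the classes of $c_1, \ldots, c_m$. Since the $c_i$ are $2$-independent by hypothesis, $|V| = 2^m$. The biconditional in the lemma is simply the statement $N = V$: the $2$-independence of $c_1, \ldots, c_m, a$ is equivalent to $\bar{a} \notin V$, while $a \notin (K_m^*)^2$ is equivalent to $\bar{a} \notin N$. I would prove $N = V$ by establishing both $V \subseteq N$ and $|N| \leq 2^m$.

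For $V \subseteq N$, it suffices to check that each $c_i$ becomes a square in $K_m$. The case $i=1$ is immediate: $\phi(x) = (x-\gamma)^2 + c$ has roots $\gamma \pm \sqrt{-c}$, so $\sqrt{c_1} = \sqrt{-c} \in K_1 \subseteq K_m$. For $i \geq 2$, the roots of $\phi^i$ are $\gamma \pm \sqrt{\alpha - c}$ as $\alpha$ ranges over the roots of $\phi^{i-1}$, so $K_i$ contains $\prod_\alpha \sqrt{\alpha - c}$, whose square is $\prod_\alpha (\alpha - c) = \phi^{i-1}(c) = \phi^i(\gamma) = c_i$. Hence $\sqrt{c_i} \in K_i \subseteq K_m$ for every $i \leq m$.

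For the upper bound $|N| \leq 2^m$, I would apply Kummer theory together with the Galois hypothesis. Nontrivial elements of $N$ are in bijection with quadratic subextensions of $K_m/K$ via $\bar{a} \mapsto K(\sqrt{a})$, and these match, through the Galois correspondence, with index-$2$ subgroups of $G_m(\phi) = \Aut(T_{2,m})$, i.e.\ with nonzero elements of $\Aut(T_{2,m})^{\mathrm{ab}} \otimes \mathbb{F}_2$. The decisive ingredient is that $\Aut(T_{2,m})$, as the $m$-fold iterated wreath product of $\mathbb{Z}/2\mathbb{Z}$, has abelianization $(\mathbb{Z}/2\mathbb{Z})^m$, the quotient characters being the parities of swaps at each of the $m$ levels. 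This yields $|N| \leq 1 + (2^m - 1) = 2^m = |V|$, so combined with $V \subseteq N$ we conclude $N = V$. I expect the main obstacle to be this abelianization calculation, but it is a standard fact about iterated wreath products of abelian groups and can be cited directly or verified by an easy induction on $m$.
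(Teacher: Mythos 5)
Your proof is correct and follows essentially the same route as the paper's: both hinge on the computation that $\Aut(T_{2,m})^{\mathrm{ab}} \cong (\mathbb{Z}/2\mathbb{Z})^m$ to bound the kernel of $K^*/(K^*)^2 \to K_m^*/(K_m^*)^2$ at dimension $m$, and both verify that each $c_i$ becomes a square in $K_m$ via the factorization $c_i = \phi^{i-1}(c) = \prod_{\alpha}(\alpha - c)$ with $\alpha - c = (\beta - \gamma)^2$. Your $N = V$ framing is a cleaner packaging of the same argument, but it is not a different proof.
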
  
\begin{proof} We note that the largest abelian quotient, or abelianization, of $\Aut(T_{2,m})$ is $(\mathbb{Z}/2\mathbb{Z})^m$. To see this, one can prove inductively (see \cite[p.215]{Wreath}) that $(G[H])^{\text{ab}}=G^{\text{ab}}\times H^{\text{ab}}$, where $G$ is a transitive permutation group, $G[H]$ is the wreath product of $G$ and $H$, and the superscript `ab' is the abelianization of a group, and then use the identification $[\mathbb{Z}/2\mathbb{Z}]^m\cong\Aut(T_{2,m})$ of the automorphism group of $T_{2,m}$ as an $m$-fold wreath product of cyclic groups. In any case, we deduce that the largest $2$-Kummer extension of $K$ within $K_m$ has degree $2^m$, and the kernel of the natural map $K^*/(K^*)^2\rightarrow  K_m^*/(K_m^*)^2$ has $\mathbb{F}_2$-dimension $m$. 

Now let $R_s$ be the set of roots of $\phi^s(x)$ in $\overline{K}$ for $s\geq1$. Note that Lemma \ref{2-ind} is clear when $m=1$, since $K_1=K(\sqrt{c_1})$ is itself a quadratic extension. Therefore, we can assume that $m\geq2$ and write 
\begin{align*}
c_m&=\phi^m(\gamma)=\phi^{m-1}(\phi(\gamma))=\phi^{m-1}(c)\\ 
\\ 
&=\prod_{\mathclap{\alpha\in R_{m-1}}}(c-\alpha)=\prod_{\mathclap{\;\,\alpha\in R_{m-1}}}(-1)(\alpha-c)=(-1)^{\#R_{m-1}}\cdot \prod_{\mathclap{\;\,\alpha\in R_{m-1}}}(\alpha-c)=\prod_{\mathclap{\;\,\alpha\in R_{m-1}}}(\alpha-c), \vspace{.1cm}
\end{align*}  
since $\#R_{m-1}$ is a non-trivial power of $2$. However, if $\alpha\in R_{m-1}$, then $\alpha-c=(\beta-\gamma)^2$ for some $\beta\in R_m$. Therefore, $c_m$ is a square in $K_m$. Likewise, since $K_{1}\subseteq K_2\dots \subseteq K_m$ and $G_m(\phi)=\Aut(T_{2,m})$ implies $G_s(\phi)=\Aut(T_{2,s})$ for all $1\leq s\leq m$, we see that $c_1, c_2, \dots c_m$ are all squares in $K_m$ by induction. However, the $c_1,c_2, \dots c_m$ are $2$-independent by assumption, and hence $c_1,c_2, \dots c_m$ must generate the kernel of the map $K^*/(K^*)^2\rightarrow  K_m^*/(K_m^*)^2$. The asserted equivalence follows.         
\end{proof}
Combining these results, we obtain a generalization of \cite[Theorem 1]{Stoll-Galois}.  
\begin{thm}{\label{thm:surjective}} Let $\phi(x)=(x-\gamma)^2+c$ be a quadratic polynomial defined over $K$. Then $G_n(\phi)=\Aut(T_{2,n})$ if and only if $c_1, c_2, \dots, c_n$ are $2$-independent.  
\end{thm}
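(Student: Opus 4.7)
The plan is to prove both directions simultaneously by induction on $n$, chaining together Proposition \ref{lem:irre}, Lemma \ref{lem:Stoll}, Lemma \ref{2-ind}, and the standard fact that $G_n(\phi) = \Aut(T_{2,n})$ if and only if $G_{n-1}(\phi) = \Aut(T_{2,n-1})$ and $H_n(\phi)$ is maximal. The base case $n=1$ is essentially a definition: $G_1(\phi) \cong \mathbb{Z}/2\mathbb{Z}$ iff $\phi(x) = (x-\gamma)^2+c$ has no root in $K$ iff $c_1 = -c$ is a nonzero non-square in $K$, which is exactly the $2$-independence of the singleton $\{c_1\}$.

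For the inductive step at level $n \geq 2$, in the forward direction suppose $G_n(\phi) = \Aut(T_{2,n})$. Since the natural restriction map $\Aut(T_{2,n}) \to \Aut(T_{2,n-1})$ is surjective and realizes $G_{n-1}(\phi)$ as its image, one gets $G_{n-1}(\phi) = \Aut(T_{2,n-1})$, so by induction $c_1, \ldots, c_{n-1}$ are $2$-independent. Separability and transitivity of $\Aut(T_{2,n})$ on the $2^n$ leaves also force $c_m \neq 0$ for all $m \leq n$, since $c_m = 0$ would make $\gamma$ a multiple root of $\phi^m$ via the chain rule $(\phi^m)'(\gamma) = \prod_i \phi'(\phi^i(\gamma)) = 0$, contradicting separability. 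With $\phi^n$ irreducible (automatic from $G_n(\phi) = \Aut(T_{2,n})$), maximality of $H_n(\phi)$ combined with Lemma \ref{lem:Stoll} yields $c_n \notin (K_{n-1}^*)^2$, and Lemma \ref{2-ind} applied with $a = c_n$ upgrades this non-square condition to the full $2$-independence of $c_1, \ldots, c_n$.

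In the reverse direction, suppose $c_1, \ldots, c_n$ are $2$-independent. The inductive hypothesis applied to the subset $c_1, \ldots, c_{n-1}$ gives $G_{n-1}(\phi) = \Aut(T_{2,n-1})$, and Proposition \ref{lem:irre} ensures $\phi^n$ is irreducible (hence so are all lower iterates by the remark following that proposition). Lemma \ref{2-ind} with $a = c_n$ now converts $2$-independence of $c_1, \ldots, c_n$ into $c_n \notin (K_{n-1}^*)^2$, Lemma \ref{lem:Stoll} converts that into maximality of $H_n(\phi)$, and the standard equivalence yields $G_n(\phi) = \Aut(T_{2,n})$, closing the induction.

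The main care needed is bookkeeping the hypotheses of the supporting lemmas: Lemma \ref{lem:Stoll} requires irreducibility of $\phi^n$, and Lemma \ref{2-ind} requires both $G_{n-1}(\phi) = \Aut(T_{2,n-1})$ and $2$-independence of $c_1, \ldots, c_{n-1}$. All of these are supplied either by the inductive hypothesis or by Proposition \ref{lem:irre}. The only step not already packaged as a cited lemma is the forward-direction verification that $c_m \neq 0$ for all $m \leq n$, a short separability argument exploiting the critical nature of $\gamma$; this is the one place where the proof steps outside the machinery already built in the section.
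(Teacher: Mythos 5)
Your proof is correct and follows essentially the same inductive argument as the paper, chaining Proposition~\ref{lem:irre}, Lemma~\ref{lem:Stoll}, and Lemma~\ref{2-ind} in both directions. The one place you "step outside the machinery," the separability check that $c_m\neq 0$, is in fact already implicit in the paper's observation that $\phi^n$ (hence each $\phi^m$, $m\le n$) is irreducible of degree $2^m>1$, so $\gamma\in K$ cannot be a root and $c_m=\phi^m(\gamma)\neq 0$; your critical-point argument is a valid alternative route to the same conclusion but is not strictly an extra ingredient.
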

\begin{proof} Assume that $G_n(\phi)=\Aut(T_{2,n})$ and proceed by induction. The base case $n=1$ is clear, since $K_1=K(\sqrt{c_1})$ is a quadratic extension. Therefore, we may assume that $n\geq2$. In particular, $G_n(\phi)=\Aut(T_{2,n})$ implies that $G_{n-1}(\phi)=\Aut(T_{2,n-1})$ and $H_n(\phi)$ is maximal. Moreover, since $G_n(\phi)=\Aut(T_{2,n})$ acts transitively on the roots of $T_{2,n}$ at every level, $\phi^n$ must be an irreducible polynomial. Hence, Lemma \ref{lem:Stoll} implies that $c_n\not\in (K_{n-1})^2$. On the other hand, the induction hypothesis implies that $c_1, c_2, \dots c_{n-1}$ are $2$-independent, since $G_{n-1}(\phi)=\Aut(T_{2,n-1})$. In particular, Lemma \ref{2-ind} implies that the full set $c_1, c_2, \dots c_{n-1}, c_n$ are $2$-independent as desired. 

Conversely, suppose that the $c_1, c_2, \dots, c_n$ are $2$-independent, and proceed by induction. Again the base case $n=1$ is clear, since $[K_1:K]=2$ if and only if the discriminant of $\phi$, which one checks is $-4c=4c_1$, is a a square in $K$. Therefore, we may assume that $n\geq2$. Note first that $\phi^n$ must be irreducible over $K$ by Proposition \ref{lem:irre}. Moreover, since the subset $c_1, c_2, \dots, c_{n-1}$ consists of $2$-independent elements, we see that $G_{n-1}(\phi)=\Aut(T_{2,n-1})$ by the induction hypothesis. In particular, Lemma \ref{2-ind} implies that $c_n\not\in (K_{n-1})^2$. Therefore, $H_n(\phi)$ is maximal by Lemma \ref{lem:Stoll} (here we use irreducibility). However, $G_{n-1}(\phi)=\Aut(T_{2,n-1})$ and $H_n(\phi)$ maximal implies $G_{n}(\phi)=\Aut(T_{2,n})$, which completes the proof.                   
\end{proof}  
Finally, we state the following corollary, formulated in a way that allows us to prove Theorem \ref{thm:small5th} and Theorem \ref{thm:4th}. In what follows, we identify $\mathbb{F}_2$ with the set $\{0,1\}$.   
\begin{cor}{\label{cor:curve}} Let $\phi(x)=(x-\gamma)^2+c$ be a quadratic polynomial defined over $K$. If $n\geq2$ is such that $G_{n-1}(\phi)=\Aut(T_{2,n-1})$ and $G_{n}(\phi)\neq\Aut(T_{2,n})$, then
\begin{equation} -\phi(\gamma)^{\epsilon_1}\cdot\phi^2(\gamma)^{\epsilon_2}\cdot \phi^3(\gamma)^{\epsilon_3}\dots \,\phi^{n-1}(\gamma)^{\epsilon_{n-1}}\cdot\phi^n(\gamma)=y_n^2
\end{equation}
for some $y_n\in K$ and some vector of exponents $\epsilon=(\epsilon_i)\in\mathbb{F}_2^{n-1}$.     
\end{cor}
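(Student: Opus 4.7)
The plan is to reduce the statement directly to the equivalence already established in Theorem \ref{thm:surjective}, which translates Galois-maximality into $2$-independence of $c_1=-\phi(\gamma)$ and $c_i=\phi^i(\gamma)$ for $i\geq 2$. The hypothesis $G_{n-1}(\phi)=\Aut(T_{2,n-1})$ then gives that $c_1,\dots,c_{n-1}$ are $2$-independent in $K^{*}/(K^{*})^{2}$, while the hypothesis $G_n(\phi)\neq\Aut(T_{2,n})$ gives that $c_1,\dots,c_n$ are \emph{not} $2$-independent. The corollary is essentially just the contrapositive of Theorem \ref{thm:surjective}, packaged in terms of the iterates $\phi^i(\gamma)$, so the work is entirely in unpacking definitions.

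Next I would split into the two cases built into the definition of $2$-independence. If some $c_i$ equals $0$, then because $c_1,\dots,c_{n-1}$ are $2$-independent, each of them lies in $K^{*}$, so the only possibility is $c_n=\phi^n(\gamma)=0$; in this case the displayed equality holds trivially with $y_n=0$ and any choice of $\epsilon_i$'s (e.g.\ all zero). Otherwise every $c_i$ is in $K^{*}$, and failure of $2$-independence produces a nontrivial relation
\[c_1^{\epsilon_1}c_2^{\epsilon_2}\cdots c_{n-1}^{\epsilon_{n-1}}c_n^{\epsilon_n}\in (K^{*})^{2}\]
for some $(\epsilon_1,\dots,\epsilon_n)\in\mathbb{F}_2^n$ not all zero. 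The key observation is that $\epsilon_n=1$, for otherwise the relation would be a nontrivial dependence among $c_1,\dots,c_{n-1}$, contradicting their $2$-independence. Writing $c_1=-\phi(\gamma)$ and $c_i=\phi^i(\gamma)$ for $i\geq 2$, this last equation becomes exactly the one stated in the corollary, with $y_n\in K^{*}$ the chosen square root.

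There is no real obstacle here beyond being careful about the degenerate case where some iterate $\phi^i(\gamma)$ vanishes (which forces $c_n=0$ by the 2-independence of the earlier $c_i$'s) and about the sign bookkeeping arising from the convention $c_1=-\phi(\gamma)$; the minus sign in the displayed identity is precisely what absorbs this convention into the exponent $\epsilon_1$ on $\phi(\gamma)$. Once Theorem \ref{thm:surjective} is invoked, the rest is a one-line extraction of the dependence relation.
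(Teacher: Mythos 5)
Your proposal is correct, and it matches the paper's intent: Corollary \ref{cor:curve} is stated immediately after Theorem \ref{thm:surjective} with no separate proof, precisely because it is the direct contrapositive extraction you describe. You correctly note the two points that make the extraction honest: that $\epsilon_n$ must equal $1$ because $c_1,\dots,c_{n-1}$ are already $2$-independent, and that the only way a $c_i$ can vanish is $c_n=\phi^n(\gamma)=0$, in which case the displayed relation holds trivially with $y_n=0$. Your reading of $-\phi(\gamma)^{\epsilon_1}$ as $(-\phi(\gamma))^{\epsilon_1}=c_1^{\epsilon_1}$ is the intended one, consistent with the adjusted post-critical set in equation (\ref{post-crit}) having $-\phi(\gamma)$ as a single element.
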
 
We now use Corollary \ref{cor:curve} to relate the size of iterated Galois groups to hyperelliptic curves. Namely, if $\phi(x)=(x-\gamma(t))^2+c(t)$ is a one-parameter family of quadratic polynomials over $K(t)$ and $a\in K$ is a specialization such that $G_{n-1}(\phi_a)=\Aut(T_{2,n-1})$ and $G_{n}(\phi_a)\neq\Aut(T_{2,n})$ for some $n\geq2$, then there exists $\epsilon\in\mathbb{F}_2^{n-1}$ and $y_n\in K$ such that $(a,y_n)$ is a rational point on the affine curve \vspace{.1cm} 
\begin{equation}{\label{curvesfamilies}}
C_{\phi,n}^{(\epsilon)}:=\Big\{(t,y)\,\Big|\, y^2=-\phi(\gamma(t))^{\epsilon_1}\cdot\phi^2(\gamma(t))^{\epsilon_2}\cdot \phi^3(\gamma(t))^{\epsilon_3}\dots \,\phi^{n-1}(\gamma(t))^{\epsilon_{n-1}}\cdot\phi^n(\gamma(t))\Big\}. \vspace{.1cm}
\end{equation} 
\vspace{.1cm}
In particular, we use this perspective for $\phi(x)=x^2+t$ when $n=5$ to prove Theorem \ref{thm:small5th} and (after conjugating) the family $\rho(x)=(x+1)^2+t$ when $n=3,4$ to prove Theorem \ref{thm:4th}.              
\section{Dynamical Galois Groups: $\phi_c(x)=x^2+c$ \textup{and} $b=1$}{\label{sec:rationalpoints}
We begin with the family $\phi_c(x)=x^2+c$ and the basepoint $b=1$ (see Theorem \ref{thm:4th}) over $K=\mathbb{Q}$, since the case when $b=0$ requires a few extra tricks to cut down the number of curves one must consider; see Theorem \ref{thm:prime} below.  
\begin{proof}[(Proof of Theorem \ref{thm:4th})] 
In order to use the Galois theory results from section \ref{sec:Galois}, which are all stated in terms of preimages of zero, we must change variables. Namely, if $c'=c-1$ and $\rho_{c'}(x)=(x+1)^2+c'$, then there is an equality of splitting fields, 
\[\mathbb{Q}(\phi_c^{-n}(1))=\mathbb{Q}(\rho_{c'}^{-n}(0)),\] 
for all $n\geq1$; to see this, simply check that $\phi_c^n(\alpha)=1$ implies $\rho_{c'}^n(\beta)=0$ for $\beta=\alpha-1$. 

Therefore, after a linear change of variables, it suffices to classify the rational points on the hyperelliptic curves $C_{\rho,3}^{(\epsilon_3)}$ and $C_{\rho,4}^{(\epsilon_4)}$ for $\rho(x)=(x+1)^2+t$ as in (\ref{curvesfamilies}) to prove Theorem \ref{thm:4th}. Here $\epsilon_3\in\mathbb{F}_2^{2}$ and $\epsilon_4\in\mathbb{F}_2^{3}$ respectively. In particular, since $\gamma=-1$, we compute that \vspace{.1cm}  
\begin{align*}
-\rho(-1)&=-t,\\
\rho^2(-1)&=t^2 + 3t + 1,\\
\rho^3(-1)&=t^4 + 6t^3 + 13t^2 + 13t + 4,\\
\rho^4(-1)&= t^8 + 12t^7 + 62t^6 + 182t^5 + 335t^4 + 398t^3 + 299t^2 + 131t + 25.  \vspace{.1cm} 
\end{align*} 
Hence, we must find all of the rational points on the curves  \vspace{.1cm}
\[C_{\rho,3}^{(\epsilon)}: y^2=(-t)^{e_1}\cdot(t^2 + 3t + 1)^{e_2}\cdot (t^4 + 6t^3 + 13t^2 + 13t + 4)\]
and  
\begin{align*}
C_{\rho,4}^{(\epsilon)}: y^2=&(-t)^{e_1}\cdot(t^2 + 3t + 1)^{e_2}\cdot (t^4 + 6t^3 + 13t^2 + 13t + 4)^{e_3}\\
&(t^8 + 12t^7 + 62t^6 + 182t^5 + 335t^4 + 398t^3 + 299t^2 + 131t + 25); \vspace{.1cm}  
\end{align*}
here $e_1,e_2, e_3\in\mathbb{F}_2$. At first pass this seems rather difficult. For instance, when $\epsilon=(1,1,1)$, $C_{\rho,4}^{(\epsilon)}$ has genus $7$. However, if $g(t)$ and $f(t)$ are polynomials with integer coefficients (at least one of which has even degree), then the rational points on the curve 
\[Z: y^2=f(t)\cdot g(t)\] 
are covered by the rational points on the family of curves $\big(X_d,\pi_d:X_d\rightarrow Z\big)$: 
\[X^{(d)}:= \{(t,u,v): du^2=f(t),\;\;\; dv^2=g(t)\},\;\;\;\;  \pi_d(t,u,v):=(t,u\cdot v).\]
Furthermore, we can assume that $d$ is one of the finitely many integers supported on the primes dividing the resultant of $f$ and $g$; see \cite[\S2.3]{Stoll-RationalPts}. In the case at hand, \vspace{.1cm} 
\[\Res\big(\rho^j(-1),\rho^3(-1)\big)\in\{-1,4\} \;\;\;\text{and}\;\;\; \Res\big(\rho^i(-1),\rho^4(-1)\big)\in\{-1,-5,25\},\vspace{.1cm} \]
for $1\leq j\leq2$ and $1\leq i\leq3$. In particular, we reduce the proof of Theorem \ref{thm:4th} to computing the integral points on the elliptic curves \vspace{.1cm} 
\[E^{(d)}: \; dy^2=t^4 + 6t^3 + 13t^2 + 13t + 4,\;\;\;\;\; d\in\{\pm{1},\pm{2}\}\vspace{.1cm} \]
and the rational points on the genus-three curves \vspace{.1cm}  
\[F^{(d)}: \; dy^2=t^8 + 12t^7 + 62t^6 + 182t^5 + 335t^4 + 398t^3 + 299t^2 + 131t + 25,\;\;\;\;\; d\in\{\pm{1},\pm{5}\}.\vspace{.1cm} \]
The reduction here is that the second defining equation of $X^{(d)}$ already has finitely many integral (resp. rational) solutions, and so we can disregard the first equation entirely. 

Moreover, \texttt{Magma} \cite{Magma} has a built in function for determining the integral solutions to hyperelliptic, quartic equations. In particular, we compute that \vspace{.05cm}
\begin{align*} 
&E^{(1)}(\mathbb{Z})=\{(-3,\pm{1}),(0,\pm{2})\},\;\;\; E^{(-1)}(\mathbb{Z})=\{(-1,\pm{1})\},\\ 
& E^{(2)}(\mathbb{Z})=\{(-4,\pm{8})\},\;\;\;\;\;\;\;\;\;\;\;\;\;\;\; E^{(-2)}(\mathbb{Z})=\{(-2,\pm{2})\}. 
\end{align*} 
Moreover, among the associated quadratic polynomials, only \vspace{.05cm} 
\[\rho_{-2}(x)=(x+1)^2-2, \vspace{.05cm}\] 
corresponding to the points $(-2,\pm{2})\in E^{(-2)}(\mathbb{Z})$, has a newly small Galois group at the third stage of iteration, i.e. $G_2(\rho_{-2})=\Aut(T_{2,2})$ and $G_3(\rho_{-2})\neq\Aut(T_{2,3})$. After conjugating back to standard form $x^2+c$, we obtain the polynomial $x^2-1$ for the basepoint $b=1$, which appears in part (2) of Theorem \ref{thm:4th}. From here, the rest of the proof follows from: 
\begin{align*} 
F^{(1)}(\mathbb{Q})&=\{\infty_{+},\infty_-, (0,\pm{5}), (-3,\pm{1})\},\\ 
F^{(-1)}(\mathbb{Q})&=\{(-1,\pm{1}), (-2,\pm{1})\},\\ 
F^{(5)}(\mathbb{Q})&=\{(1,\pm{85})\}, \\ 
F^{(-5)}(\mathbb{Q})&=\{(-7/3,\pm{185}/81)\}. \vspace{.1cm}  
\end{align*}
In particular, we deduce that \vspace{.05cm} 
\[\rho_{1}(x)=(x+1)^2+1\;\;\;\text{and}\;\;\;\rho_{\frac{-7}{\;3}}(x)=(x+1)^2-7/3\vspace{.05cm}, \]
are the only quadratic polynomials of the form, $\rho_c(x)=(x+1)^2+c$ for $c\in\mathbb{Q}$, satisfying $G_3(\rho_c)=\Aut(T_{2,3})$ and $G_4(\rho_c)\neq\Aut(T_{2,4})$. Conjugating these polynomials back to standard form $x^2+c$, we find $x^2+1$ and $x^2-4/3$ for the basepoint $b=1$ as claimed.  
\end{proof} 
We determine $F^{(d)}(\mathbb{Q})$ for $d\in\{\pm{1},\pm{5}\}$ in Lemmas \ref{lem:d=1}, \ref{lem:d=-1}, \ref{lem:d=5} and \ref{lem:d=-5} below. However, since their proofs have nothing to do with dynamics and since the techniques we use might be of independent interest to those studying rational points on hyperelliptic curves, we replace the variable $t$ with the (more standard) variable $x$, and replace the (more complicated) names $F^{(1)}$, $F^{(-1)}$, $F^{(5)}$ and $F^{(-5)}$ with $C_1$, $C_2$, $C_3$ and $C_4$ respectively.           
\begin{lem}{\label{lem:d=1}} Let $C_1$ be the hyperelliptic curve given by \vspace{.05cm}  
\[C_1: y^2=x^8 + 12x^7 + 62x^6 + 182x^5 + 335x^4 + 398x^3 + 299x^2 + 131x + 25.\vspace{.05cm}\] 
Then $C_1(\mathbb{Q})=\{\infty_{+},\infty_-, (0,\pm{5}), (-3,\pm{1})\}$ is a complete list of rational points. 
\end{lem}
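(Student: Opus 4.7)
The curve $C_1$ has genus $3$, and we already see six rational points: the two points at infinity $\infty_\pm$, together with $(0,\pm 5)$ and $(-3,\pm 1)$. The plan is to show these are all of $C_1(\QQ)$ via the Chabauty--Coleman method, supplemented if necessary by the Mordell--Weil sieve.

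The first step is to analyze the Jacobian $J=\Jac(C_1)$ and bound its Mordell--Weil rank. Using the hyperelliptic involution, the degree-zero classes such as $[(0,5)-\infty_+]$, $[(-3,1)-\infty_+]$, and $[(0,5)-(-3,1)]$ are natural candidates for generators of a subgroup of $J(\QQ)$, and their independence modulo torsion can be checked in \texttt{Magma} by computing heights or reducing modulo several primes. In parallel, a $2$-descent on $J$ (via \texttt{Magma}'s \texttt{TwoSelmerGroup} functionality for hyperelliptic Jacobians) yields an upper bound on $\rank J(\QQ)$. The critical hope is that this upper bound is at most $2$, strictly less than $g=3$, so that Chabauty's method applies.

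Assuming $\rank J(\QQ)\le 2$, the next step is to choose a prime $p$ of good reduction and perform Coleman integration. For each residue disk $D_Q\subset C_1(\QQ_p)$ above a point $Q\in\tilde{C}_1(\mathbb{F}_p)$, one computes a nonzero regular differential $\omega$ on $J_{\QQ_p}$ that annihilates the image of $J(\QQ)$, and then expresses the Coleman integral $\int\omega$ as a $p$-adic power series on $D_Q$. Every rational point of $C_1$ in $D_Q$ is a zero of this series, and its Newton polygon gives an explicit upper bound on the number of such zeros. One then verifies, residue disk by residue disk, that the six known points account for all the zeros.

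The main obstacle I anticipate is establishing the rank bound rigorously: if the $2$-Selmer rank exceeds the number of independent classes one can exhibit, one must either run a higher descent or locate additional rational classes (possibly coming from pullbacks via maps to quotient curves). A secondary obstacle is that in certain residue disks the Chabauty bound may strictly exceed the number of known rational points, in which case one invokes the Mordell--Weil sieve, combining images of $C_1(\QQ)\hookrightarrow J(\QQ)/NJ(\QQ)$ with reductions modulo several auxiliary primes $q$ to eliminate the extraneous residue classes. Given the moderate genus and the availability of the Coleman integration and sieve routines in \texttt{Magma} and \texttt{Sage}, these obstacles should be tractable.
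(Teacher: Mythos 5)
Your plan is correct and matches the paper's proof in essence: a \texttt{Magma} $2$-descent bounds $\rank J_1(\QQ)\le 2 < g=3$, two of the known divisor classes (e.g.\ $[\infty_--\infty_+]$ and $[(-3,1)-\infty_+]$) are verified independent by reduction, and Chabauty--Coleman at $p=3$ pins down the annihilating differential $\overline{\omega}=(x^2+x)\,dx/2y$. The only refinement worth noting is that the Mordell--Weil sieve turns out to be unnecessary here, because the six known points exactly exhaust the Chabauty bounds: $\overline{\omega}$ is nonvanishing at $\overline{\infty}_\pm$ (so one point each) and vanishes to order $1$ at $(0,\pm 2)\in C_1(\mathbb{F}_3)$ (so at most two points each, and indeed $(0,\pm 5)$ and $(-3,\mp 1)$ both land in those disks).
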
 
\begin{proof} Let $J_1$ be the Jacobian of $C_1$. A two-descent with \texttt{Magma} shows that $J_1(\mathbb{Q})$ has rank at most two. Moreover, we check that 
\[\gcd\big(\#J_1(\mathbb{F}_{11}),\#J_1(\mathbb{F}_{29})\big)=1,\] 
and hence $J_1(\mathbb{Q})$ has trivial torsion \cite[Appendix]{Katz}. On the other hand, the points 
\[P_1=\big[\infty_--\infty_+\big]\;\;\;\text{and}\;\;\;Q_1=\big[(-3,1)-\infty_+\big] \vspace{.1cm}\]  
generate a subgroup isomorphic to $\mathbb{Z}/2\mathbb{Z}\times\mathbb{Z}/1396\mathbb{Z}$ inside the reduction $J_1( \mathbb{F}_{13})$. Therefore, $J_1(\mathbb{Q})$ is free of rank two, and $P_1$ and $Q_1$ must generate a finite index subgroup. 

We proceed now with the method of Chabauty and Coleman at the prime $p=3$; see \cite{Poonen-McCal} for a nice introduction to this method. 

Let $\omega\in\Omega_{J_1}^1(\mathbb{Q}_3)\cong\Omega_{C_1}^1(\mathbb{Q}_3)$ be a regular $1$-form whose integral annihilates the Mordell-Weil group $J_1(\mathbb{Q})$; such a $1$-form is guaranteed to exist since the genus of $C_1$ is strictly larger than the rank of its Jacobian. To calculate $\omega$ explicitly (for this curve and all others) it is convenient to find independent divisor classes (up to \emph{linear} equivalence) $D_1,D_2\in J_1(\mathbb{Q})$ such that 
\[D_i=[p_{i1}-q_{i1}]+[p_{i2}-q_{i2}]+[p_{i3}-q_{i3}]\]  
for some points $p_{ij},q_{ij}\in C_1(\mathbb{Q}_3)$ satisfying $\pi_3(p_{ij})=\pi_3(q_{ij})$, where $\pi_3:C_1(\mathbb{Q}_3)\rightarrow C_1(\mathbb{F}_3)$ is the reduction map. Given divisors classes $D_i$ in this form, we can compute 
\[\int_0^{D_i}\omega=\int_{q_{i1}}^{p_{i1}}\omega+\int_{q_{i2}}^{p_{i2}}\omega+\int_{q_{i3}}^{p_{i3}}\omega,\]
using a simple $3$-adic parametrization of each residue class (i.e. fiber of the reduction map), which is implemented in \texttt{Sage} \cite{Sage}. However, because our curve has an even degree model, finding the divisor classes $D_1$ and $D_2$ requires a bit of extra work (due to the group law implementation for $J_1$ in \texttt{Magma}). 

For $C_1$, the first divisor is easy to find: $D_1=[(0,-5)-(-3,-1)]$. To find the other, we compute two auxiliary divisors $R_1=30Q_1-D_1$ and $R_2=30P_1+10Q_1$ in the kernel of the reduction map mod $3$ on $J_1$. The Mumford representations \cite{Mumford} of the $R_i=[a_i(x),b_i(x)]$ are much too large to write down here (requiring several pages). However, \texttt{Magma} readily verifies that the polynomials $a_1(x)$ and $a_2(x)$ split completely in $\mathbb{Q}_3[x]$, so that \vspace{.075cm}
\[R_1=[p_1+p_2+p_3+\infty_-]-2[\infty_-+\infty_+]\;\;\;\; \text{and}\;\;\;\; R_2=[q_1+q_2+q_3+\infty_-]-2[\infty_-+\infty_+] \vspace{.05cm}\]
for some points $p_i,q_i\in C_1(\mathbb{Q}_3)$. In particular, their difference is supported away from infinity: 
\[D_2=R_1-R_2=[p_1-q_1]+[p_2-q_2]+[p_3-q_3].\]
Moreover, we can rearrange the terms (if necessary) so that $\pi_3(p_{i})=\pi_3(q_{i})$, since $D_2$ is in the kernel of the reduction map.  

Now that we have our two divisors $D_1$ and $D_2$ we can begin to compute $\omega$ up to any precision. To do this, we use the standard basis $\eta_k=x^k\,dx/2y$ for $0\leq k\leq 2$ of $\Omega_{C_1}^1(\mathbb{Q}_3)$, and we compute with the \texttt{coleman_integral()} function in \texttt{Sage} that \vspace{.075cm}
\[ \Big(\int_0^{D_1}\eta_k\Big)_{0\leq k\leq2}=\Big(\,\int_{(-3,-1)}^{(0,5)}\eta_k\,\Big)_{0\leq k\leq2}=\big(\,2\cdot3 + 3^2 + O(3^3),\; O(3^3), \; 2\cdot3^2 + O(3^3\,) \big)\] 
and that 
\[\Big(\int_0^{D_2}\eta_k\Big)_{0\leq k\leq2}=\Big(\, \sum_{i=1}^3\int_{q_i}^{p_i}\eta_k \,\Big)_{0\leq k\leq2}=\big(\,2\cdot3^2 + O(3^3),\; 3^2 + O(3^3),\; 2\cdot3^2 +O(3^3)\,\big).\vspace{.1cm}\]
Moreover, $\omega=(c_2x^2+c_1x+c_0)\, dx/2y$ for some $c_0,c_1, c_2\in\mathbb{Z}_3$ and $\int_0^{D_1}\omega=\int_0^{D_2}\omega=0$ by definition of $\omega$. In particular, after scaling appropriately, we see that $c_1\equiv0\Mod{3}$ and $c_2\equiv c_1\Mod{3}$. Hence, the reduction of the annihilating differential $\omega$ must be 
\[ \overline{\omega}=\frac{(x^2+x)dx}{2y}\in\Omega_{C_1}^1(\mathbb{F}_3),\]
up to an irrelevant scalar. On the other hand, 
\[C_1(\mathbb{F}_3)=\big\{\overline{\infty}_+,\overline{\infty}_-, (0,\pm{2})\big\},\]  
and we are in the fortuitous situation that the rational points $C_1(\mathbb{Q})$ surject onto $C_1(\mathbb{F}_3)$. In particular, the residue classes of $\overline{\infty}_+$ and $\overline{\infty}_-$ each contain exactly one rational point, since $\ord_{\overline{\omega}}(\overline{\infty}_+)=0=\ord_{\overline{\omega}}(\overline{\infty}_-)$; see \cite[Lemma 5.1]{Poonen-McCal}. On the other hand, 
\[\overline{\omega}=\frac{(x^2+x)dx}{2y}=(2x+2x^3+2x^5+\dots )dx, \] 
and since the coefficient of $x^2$ in $\overline{\omega}$ is $0$, we see that the residue classes of $(0,2)$ and $(0,-2)$ each contain $\ord_{\overline{\omega}}(0,2)+1=2=\ord_{\overline{\omega}}(0,-2)+1$ rational points; see \cite[Remark 5.2]{Poonen-McCal}. This completes the proof of Lemma \ref{lem:d=1}.                                     
\end{proof} 
\begin{lem}{\label{lem:d=-1}} Let $C_{2}$ be the hyperelliptic curve given by \vspace{.05cm}  
\[C_{2}\,:\;\, y^2=-(x^8 + 12x^7 + 62x^6 + 182x^5 + 335x^4 + 398x^3 + 299x^2 + 131x + 25).\vspace{.05cm}  \] 
Then $C_{2}(\mathbb{Q})=\{(-1,\pm{1}), (-2,\pm{1})\}$ is a complete list of rational points.
\end{lem}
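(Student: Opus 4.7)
The plan is to mirror the Chabauty--Coleman strategy used for $C_1$ in Lemma \ref{lem:d=1}, since $C_2$ is the quadratic twist of $C_1$ by $-1$ and is again a hyperelliptic curve of genus $3$. The three ingredients I need are: a rank bound $\rank J_2(\mathbb{Q})\leq 2$, where $J_2=\Jac(C_2)$, which is what makes Chabauty applicable; explicit generators (up to finite index) of $J_2(\mathbb{Q})$; and a prime $p$ of good reduction at which the reduction of $C_2(\mathbb{Q})$ combined with the zero locus of an annihilating $1$-form pins down the rational points.

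First I would run a 2-descent in \texttt{Magma} to obtain the rank bound, then compute $\gcd(\#J_2(\mathbb{F}_{p_1}),\#J_2(\mathbb{F}_{p_2}))$ at two well-chosen primes of good reduction to control the torsion of $J_2(\mathbb{Q})$. Using the four known rational points $(-1,\pm 1)$ and $(-2,\pm 1)$, I would then produce two independent divisor classes, for instance a difference such as $[(-1,1)-(-2,1)]$ together with a second class built from the remaining pair of points, and verify by reduction modulo a small good prime that they generate a finite-index subgroup of the expected rank.

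Next I would perform the Chabauty--Coleman step. The prime $p=3$ is a natural first candidate: a direct check shows $C_2(\mathbb{F}_3)=\{(1,\pm 1),(2,\pm 1)\}$, and the four known rational points surject onto this set under reduction (since $-1\equiv 2$ and $-2\equiv 1\Mod{3}$). Following the procedure of Lemma \ref{lem:d=1}, I would replace my generators by auxiliary multiples that lie in the kernel of the reduction map on $J_2$ and whose Mumford polynomials split completely over $\mathbb{Q}_3$, then integrate the standard basis $\eta_k=x^k\,dx/2y$ for $0\leq k\leq 2$ via \texttt{Sage}'s \texttt{coleman_integral()}. The two vanishing conditions $\int_0^{D_i}\omega=0$ would determine the annihilating differential $\omega\in\Omega^1_{C_2}(\mathbb{Q}_3)$ up to scalar, and a local count of zeros of $\overline{\omega}$ on each residue class of $C_2(\mathbb{F}_3)$, via \cite[Lemma 5.1 and Remark 5.2]{Poonen-McCal}, should force exactly one rational point per residue class, matching the known list.

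The hardest step is likely the Coleman integration itself. Unlike $C_1$, the curve $C_2$ has no rational points at infinity, since the leading coefficient $-1$ is a non-square in $\mathbb{Q}$ and in fact already a non-square in $\mathbb{F}_3$, so every divisor class must be represented by Mumford data supported on finite points; this tends to inflate the degrees of the auxiliary polynomials and makes locating $\mathbb{Q}_3$-splittings more delicate. Moreover, if $\overline{\omega}$ happens to vanish to too high an order at some point of $C_2(\mathbb{F}_3)$, the naive zero count will fail to be sharp; in that case I would switch to $p=5$ or $p=7$, or combine Chabauty with a Mordell--Weil sieve in the spirit of Lemma \ref{lem:d=5}.
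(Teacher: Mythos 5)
Your plan is essentially the paper's: bound $\operatorname{rank}J_2(\mathbb{Q})\leq 2$ by $2$-descent, kill torsion with a gcd of group orders at two good primes, verify rank $2$ using the four known points, and then run Chabauty--Coleman at $p=3$ to show each of the four residue classes in $C_2(\mathbb{F}_3)$ contains exactly one rational point. The one practical step you miss is how the paper handles the difficulty you correctly flag, namely that $C_2$ has no rational points at infinity: the paper does \emph{not} change the prime, but changes the \emph{model}, passing via $(x,y)\mapsto(1/(x+1),\,y/(x+1)^4)$ to a birationally equivalent curve $C_2'$ that has both points at infinity rational, and does all Jacobian arithmetic and Coleman integration there. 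Your proposed remedy of switching to $p=5$ or $p=7$ would not help with this particular obstruction, since the absence of rational points at infinity is a property of the curve over $\mathbb{Q}$ (the leading coefficient $-1$ is a non-square in $\mathbb{Q}$) and has nothing to do with the choice of residue prime; the relevant fix is the change of model, which gives \texttt{Magma} convenient degree-zero divisor representatives supported at infinity (so one can write kernel-of-reduction elements as $[p_1+p_2+p_3+\infty_- ]-2[\infty_-+\infty_+]$ and take differences supported away from infinity, exactly as in Lemma~\ref{lem:d=1}). With that adjustment your outline matches the paper's proof; in particular no Mordell--Weil sieve is needed here, since the annihilating $1$-form reduces to $(x^2+1)\,dx/2y$, which is non-vanishing on all of $C_2'(\mathbb{F}_3)$.
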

\begin{proof} In order to more easily compute with the Jacobian of $C_{2}$, we move two of the known rational points to infinity. Specifically, let 
\[ C_{2}':\;\, y^2=x^8 - x^7 - x^6 - 2x^5 - 5x^4 - 6x^3 - 6x^2 - 4x - 1.\]
Then $(x,y)\rightarrow(1/(x+1),y/(x+1)^4)$ is a birational map from $C_{2}$ to $C_{2}'$, and it suffices to show that 
\[C_{2}'(\mathbb{Q})=\{\infty_{+},\infty_-,(-1,\pm{1})\}\] 
to prove Lemma \ref{lem:d=-1}.

Let $J_{2}'$ be the Jacobian of $C_{2}'$. A two-descent with \texttt{Magma} shows that $J_{2}'(\mathbb{Q})$ has rank at most two. Moreover, we compute that 
\[\gcd\big(\#J_{2}'(\mathbb{F}_{11}),\#J_{2}'(\mathbb{F}_{29})\big)=1,\] and hence $J_{2}'(\mathbb{Q})$ has trivial torsion \cite[Appendix]{Katz}. On the other hand, the images of the rational points 
\[P_2':=\big[\infty_--\infty_+\big]\;\;\;\;\text{and}\;\;\;\; Q_2':=\big[(-1,1)-\infty_+\big]\] 
generate a subgroup isomorphic to $\mathbb{Z}/2\mathbb{Z}\times\mathbb{Z}/1396\mathbb{Z}$ in $J_{2}'( \mathbb{F}_{13})$. Therefore, $J_{2}'(\mathbb{Q})$ is free of rank two, and $P_2'$ and $Q_2'$ must generate a finite index subgroup. 

We proceed now with the method of Chabauty and Coleman at the prime $p=3$. In particular, we find three rational divisors  
\[R_1':=37P_2'+Q_2',\;\;R_2':=33Q_2'-3P_2',\;\;R_3':=31P_2'-35Q_2'\] 
in the kernel of the reduction homomorphism $J_{2}'(\mathbb{Q})\rightarrow J_{2}'(\mathbb{F}_3)$. Moreover, the Mumford representations $[a_i'(x),b_i'(x)]$ of $R_i'$ have the property that the cubic polynomials $a_i'(x)$ split completely in $\mathbb{Q}_3[x]$. Therefore, we have that 
\[R_i'=[p_{i1}'+p_{i2}'+p_{i3}'+\infty_-]-2[\infty_-+\infty_+]\]  
for some $p_{ij}'\in C_{2}'(\mathbb{Q}_3)$. In particular, the differences $D_1':=R_1'-R_2'$ and $D_2':=R_1'-R_3'$ are supported away from infinity, and 
\begin{align*}
D_1'=&\;[p_{11}'-p_{21}']+[p_{12}'-p_{22}']+[p_{13}'-p_{23}'],\\ 
D_2'=&\;[p_{11}'-p_{31}']+[p_{12}'-p_{32}']+[p_{13}'-p_{33}']   
\end{align*} 
are the sums of differences of $3$-adic points in the same residue class. We now use $D_1'$ and $D_2'$ to compute the annihilating $1$-form $\omega$ up to any precision. To do this, we again use the standard basis $\eta_k=x^k\,dx/2y$ for $0\leq k\leq 2$ of $\Omega_{C_{2}'}^1(\mathbb{Q}_3)$ and compute that \vspace{.075cm}
\[\Big(\int_0^{D_1'}\eta_k\Big)_{0\leq k\leq2}=\Big(\, \sum_{i=1}^3\int_{p_{2i}'}^{p_{1i}'}\eta_k \,\Big)_{0\leq k\leq2}=\big(\,3 + O(3^3),\;2\cdot3+2\cdot3^2 +O(3^4) ,\;2\cdot3 + 3^2 + O(3^4) \,\big)\] 
and that 
\[\Big(\int_0^{D_2'}\eta_k\Big)_{0\leq k\leq2}=\Big(\, \sum_{i=1}^3\int_{p_{3i}'}^{p_{1i}'}\eta_k \,\Big)_{0\leq k\leq2}=\big(\,3^2+O(3^3),\; O(3^3) ,\;2\cdot3^2+O(3^3)\,\big)\vspace{.1cm}\] 
with the \texttt{coleman_integral()} function in \texttt{Sage}. Writing $\omega=(c_2x^2+c_1x+c_0)\, dx/2y$ for some $c_0,c_1, c_2\in\mathbb{Z}_3$, we see that 
\[c_0+2c_1+2c_2\equiv0\Mod{3}\;\;\;\text{and}\;\;\; c_0+2c_2\equiv0\Mod{3}.\] 
Hence, $c_1\equiv0\Mod{3}$ and $c_0\equiv c_2\Mod{3}$, and up to an irrelevant scalar, $\omega$ reduces to 
\[\overline{\omega}=\frac{(x^2+1)dx}{2y}\in\Omega_{C_{2}'}^1(\mathbb{F}_3).\] 
On the other hand, \vspace{.075cm} 
\[C_{2}'(\mathbb{F}_3)=\{\overline{\infty}_{+},\overline{\infty}_-,(-1,\pm{1})\}. \vspace{.075cm}\] 
Therefore, \cite[Lemma 5.1]{Poonen-McCal} implies that $C_{2}'(\mathbb{Q})=\{\infty_{+},\infty_-,(-1,\pm{1})\}$ is a complete list of rational points, since $C_{2}'(\mathbb{Q})$ surjects onto $C_{2}'(\mathbb{F}_3)$ and $\overline{\omega}$ restricted to $C_2'(\mathbb{F}_3)$ is non-vanishing. 
\end{proof} 

\begin{lem}{\label{lem:d=5}} Let $C_{3}$ be the hyperelliptic curve given by \vspace{.05cm}  
\[C_{3}: y^2=5\cdot(x^8 + 12x^7 + 62x^6 + 182x^5 + 335x^4 + 398x^3 + 299x^2 + 131x + 25).\vspace{.05cm}  \] 
Then $C_{3}(\mathbb{Q})=\{(1,\pm{85})\}$ is a complete list of rational points.
\end{lem}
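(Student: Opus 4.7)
The plan follows the template of Lemmas \ref{lem:d=1} and \ref{lem:d=-1} for the first two steps, but then departs from it, consistent with the acknowledgement of Michael Stoll's help on the Mordell--Weil sieve. First, I would run a two-descent on the Jacobian $J_3$ in \texttt{Magma} to obtain an upper bound on $\rank J_3(\mathbb{Q})$. Next, I would compute $\gcd\bigl(\#J_3(\mathbb{F}_{\ell_1}),\#J_3(\mathbb{F}_{\ell_2})\bigr)$ for a few small primes $\ell_i$ of good reduction, in order to pin down (or kill) the torsion via the standard injection $J_3(\mathbb{Q})_{\mathrm{tors}}\hookrightarrow J_3(\mathbb{F}_\ell)$. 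I would then exhibit concrete divisor classes in $J_3(\mathbb{Q})$---at minimum the obvious $[(1,85)-(1,-85)]$, together with differences supported on roots of the quadratic factors of the defining polynomial---and verify that their reductions modulo a small prime span a subgroup whose rank matches the descent bound. This identifies $J_3(\mathbb{Q})$ with an explicit free abelian group of known rank, up to a small computable cokernel.

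With generators in hand I would attempt the method of Chabauty--Coleman at a prime $p$ of good reduction, exactly as in the two previous lemmas: construct divisor classes in the kernel of reduction mod $p$ whose Mumford representations split completely over $\mathbb{Q}_p$, use \texttt{Sage}'s \texttt{coleman\_integral} against the standard basis $\eta_k=x^k\,dx/2y$ to compute their $p$-adic integrals, and solve the resulting linear system for the reduction $\overline{\omega}\in\Omega^1_{C_3}(\mathbb{F}_p)$ of an annihilating $1$-form. Since $C_3(\mathbb{Q})$ appears to contain only two points while $C_3(\mathbb{F}_p)$ will typically be larger, pure Chabauty--Coleman is unlikely to close off every residue disk, and some phantom residue classes are expected to survive the differential count of \cite[Lemma 5.1, Remark 5.2]{Poonen-McCal}.

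To dispose of these I would run a Mordell--Weil sieve: for an auxiliary modulus $N$ and primes $q_1,\dots,q_r$ of good reduction, compute the map $J_3(\mathbb{Q})/NJ_3(\mathbb{Q})\to\prod_i J_3(\mathbb{F}_{q_i})/NJ_3(\mathbb{F}_{q_i})$, mark the subsets cut out by the images of $C_3(\mathbb{F}_{q_i})$ under a fixed Abel--Jacobi map, and intersect across $i$. Any hypothetical extra rational point on $C_3$ would produce a class surviving every intersection; the aim is to verify that only the classes coming from $(1,\pm 85)$ do so. The main obstacle will be calibrating this sieve---choosing $N$ divisible by enough small prime powers and enough auxiliary primes $q_i$ for the $C_3$-images to be strictly thinner than the full $J_3$-images on a sufficient variety of primes---while keeping the combined computation tractable. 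A closely related difficulty is that if $\rank J_3(\mathbb{Q})$ happens to equal the genus $3$, the Chabauty step contributes nothing at all and the entire argument must be carried by the sieve; this is presumably the regime in which Stoll's expertise was required.
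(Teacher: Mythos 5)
Your overall strategy matches the paper's: two-descent to bound the rank, the $\gcd$ trick for torsion, an explicit-generators step, Chabauty--Coleman at a small prime, and a Mordell--Weil sieve to clean up residue disks that the differential count cannot exclude. The Chabauty and sieve halves of your plan are essentially what the paper does (the paper works at $p=3$, finds the annihilating form to reduce mod $3$ to $(x^2+x+2)\,dx/2y$, which is non-vanishing, so every residue disk contains at most one rational point, and then runs the sieve to eliminate the two disks over $(1,\pm1)\in C_3'(\mathbb{F}_3)$ with $S=\{3,11,13,17,19,61,331,379,409,947,1201\}$ and $N=2^3\cdot3^2\cdot5\cdot7\cdot11\cdot17\cdot61$). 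You also correctly anticipate that the even-degree model should be replaced by one with rational points at infinity: the paper passes to $C_3'$ via $(x,y)\mapsto(1/(x-1),y/(x-1)^4)$.

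The genuine gap is in your generators step. You propose building $J_3(\mathbb{Q})$ from $[(1,85)-(1,-85)]$ together with ``differences supported on roots of the quadratic factors of the defining polynomial.'' The octic does not furnish usable rational quadratic factors here, and -- this is the crux the paper emphasizes -- the divisor classes coming from the known rational points do \emph{not} generate a finite-index subgroup. A two-descent gives rank $\le 2$, torsion is trivial, but $[\infty_--\infty_+]$ together with the Abel--Jacobi image of $(1,\pm85)$ spans only rank $1$. One must search (``with some difficulty,'' per the paper) for an additional rational divisor class of degree $3$ with large Mumford coefficients, namely
\[
P_3'=\Bigl[x^3+\tfrac{849964}{706377}x^2+\tfrac{43612}{100911}x+\tfrac{34501}{706377},\ -85x^4+\tfrac{172717280}{2119131}x^2+\tfrac{11001560}{302733}x+\tfrac{9205970}{2119131},\ 4\Bigr],
\]
and then certify rank $2$ by reducing $P_3'$ and $Q_3'=[\infty_--\infty_+]$ into $J_3'(\mathbb{F}_{11})\times J_3'(\mathbb{F}_{13})$, where their images generate $\mathbb{Z}/3\mathbb{Z}\times\mathbb{Z}/135450\mathbb{Z}$. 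Without this extra generator your plan stalls before the Chabauty step. A second, smaller omission: since $P_3',Q_3'$ only give a finite-index subgroup $G_3'\le J_3'(\mathbb{Q})$, one must justify sieving with $G_3'$ in place of the full Mordell--Weil group; the paper does this by checking, for each candidate prime $\ell$, injectivity of $G_3'/\ell G_3'\to\prod_{\ell'\in S_\ell}J_3'(\mathbb{F}_{\ell'})/\ell J_3'(\mathbb{F}_{\ell'})$, so that the index is coprime to $\#J_3'(\mathbb{F}_q)$ for all $q\in S$. Your sketch asserts the sieve's conclusion but does not account for this finite-index issue. Finally, your remark about the rank possibly equaling the genus (so Chabauty contributes nothing) does not apply here: genus is $3$ and rank is $2$, so Chabauty does real work and the sieve is only needed for the two surviving disks.
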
 
\begin{proof} As in the previous case, it is useful to choose a model for $C_3$ with rational points at infinity. Specifically, let \vspace{.1cm}
\[C_3':\;\, y^2=7225x^8 + 23185x^7 + 32665x^6 + 26370x^5 + 13325x^4 + 4310x^3 + 870x^2 +100x + 5. \vspace{.1cm}\]
Then $(x,y)\rightarrow(1/(x-1),y/(x-1)^4)$ is a birational map from $C_{3}$ to $C_{3}'$, and it suffices to show that 
\[C_{3}'(\mathbb{Q})=\{\infty_{+},\infty_-\}\vspace{.1cm}\] 
to prove Lemma \ref{lem:d=5}. Let $J'_3$ be the Jacobian of $C_3'$. A two-descent with \texttt{Magma} shows that $J'_{3}(\mathbb{Q})$ has rank at most two. Moreover, we compute that \[\gcd\big(\#J'_{3}(\mathbb{F}_{3}),\#J'_{3}(\mathbb{F}_{11})\big)=1,\] 
and hence $J'_{3}(\mathbb{Q})$ has trivial torsion; see, for instance, \cite[Appendix]{Katz}. 

However, unlike the previous cases, the subgroup of $J'_3(\mathbb{Q})$ generated by the images of the known points from $C_3'(\mathbb{Q})$ do not generate a finite index subgroup. Therefore, we must search with a computer to find an additional rational divisor. With some difficulty, we accomplish this and find a new rational point with Mumford representation \cite{Mumford}: \vspace{.15cm} 
\[P_3':=\bigg[x^3 + \frac{849964}{706377}x^2 + \frac{43612}{100911}x + \frac{34501}{706377},\;-85x^4 +\frac{172717280}{2119131}x^2 + \frac{11001560}{302733}x + \frac{9205970}{2119131},\; 4\bigg].  \vspace{.15cm} \]
Moreover, we compute that the images of $P_3'$ and  
\[Q_3':=[\infty_--\infty_+]\]
in $J'_3(\mathbb{F}_{11})\times J'_3(\mathbb{F}_{13})$ generate a group isomorphic to $\mathbb{Z}/3\mathbb{Z}\times \mathbb{Z}/135450\mathbb{Z}$. Hence, $J'_3(\mathbb{Q})$ is free of rank two, and $P_3'$ and $Q_3'$ generate a finite index subgroup. Equivalently, the points \vspace{.05cm}
\[P_3=p_1+p_2+p_3+(1,85)-2\,[I_-+I_+]\;\;\;\text{and}\;\;\;\; Q_3=(1,85)-(1,-85) \vspace{.05cm}\]  
are independent points on the Jacobian $J_3$ of $C_3$; here $p_1$, $p_2$, and $p_3\in C_3(\overline{\mathbb{Q}})$ are points with $x$-coordinates supported on $p(x)=x^3 + x^2 - x - 4$, and $I_-$ and $I_+$ are the two points at infinity on $C_3$, defined over a quadratic extension. 

We now proceed with the method of Chabauty and Coleman at the prime $p=3$. Let \vspace{.05cm}
\[R_1':=-34Q_3',\;\;R_2':=15P_3'-7Q_3',\;\;R_3':=3R_2' \vspace{.05cm}\] 
be three rational divisors in the kernel of the reduction homomorphism $J'_{3}(\mathbb{Q})\rightarrow J'_{3}(\mathbb{F}_3)$. The Mumford representations $[a'_i(x),b'_i(x)]$ of $R'_i$ have the property that the cubic polynomials $a'_i(x)$ split completely in $\mathbb{Q}_3[x]$. Therefore, we have that \vspace{.05cm} 
\[R_i'=[p'_{i1}+p'_{i2}+p'_{i3}+\infty_-]-2[\infty_++\infty_-] \vspace{.05cm}\]  
for some $p'_{ij}\in C_{3}'(\mathbb{Q}_3)$. In particular, $D_1':=R'_1-R'_2$ and $D_2:=R'_1-R'_3$ are supported away from infinity, and 
\begin{align*}
D'_1=&\;[p'_{11}-p'_{21}]+[p'_{12}-p'_{22}]+[p'_{13}-p'_{23}],\\ 
D'_2=&\;[p'_{11}-p'_{31}]+[p'_{12}-p'_{32}]+[p'_{13}-p'_{33}]   
\end{align*} 
are the sums of differences of points in the same residue class. In particular, we use $D'_1$ and $D'_2$ to compute the annihilating $1$-form $\omega$ up to any precision. Again letting $\eta_k=x^k\,dx/2y$ for $0\leq k\leq 2$ of $\Omega_{C_{3}'}^1(\mathbb{Q}_3)$ be the standard basis, we compute with the \texttt{coleman_integral()} function in \texttt{Sage} that \vspace{.075cm}
\[\Big(\int_0^{D'_1}\eta_k\Big)_{0\leq k\leq2}=\Big(\, \sum_{i=1}^3\int_{p'_{2i}}^{p'_{1i}}\eta_k \,\Big)_{0\leq k\leq2}=\big(\,O(3^5),\; 3^2+2\cdot3^3 +O(3^4) ,\;2\cdot3^2 + O(3^4) \,\big).\] 
and that \vspace{.035cm} 
\[\Big(\int_0^{D'_2}\eta_k\Big)_{0\leq k\leq2}=\Big(\, \sum_{i=1}^3\int_{p'_{3i}}^{p'_{1i}}\eta_k \,\Big)_{0\leq k\leq2}=\big(\,2\cdot3 + 3^2 + O(3^4),\; 2\cdot3 +3^2+ 3^3+O(3^5),\;2\cdot3^2 + 3^4 +O(3^5)\,\big).\vspace{.1cm}\] 
Writing $\omega=(c_2x^2+c_1x+c_0)\, dx/2y$ for some coefficients $c_0,c_1, c_2\in\mathbb{Z}_3$, we see that \vspace{.1cm}  
\[c_1+2c_2\equiv0\Mod{3}\;\;\;\text{and}\;\;\; 2c_0+2c_1\equiv0\Mod{3}.\vspace{.1cm} \] 
Hence, $c_1\equiv c_2\Mod{3}$ and $c_0\equiv 2c_1\Mod{3}$, and up to an irrelevant scalar, $\omega$ reduces to 
\[\overline{\omega}=\frac{(x^2+x+2)dx}{2y}\in\Omega_{C_3'}^1(\mathbb{F}_3).\] 
Therefore, since $\overline{\omega}$ is non-vanishing on $C_3'(\mathbb{F}_3)$, it must be the case that each residue class of $C_3'(\mathbb{Q}_3)$ contains at most one rational point; see \cite[Lemma 5.1]{Poonen-McCal}. In particular, it suffices to show that the residue classes of $C_3'(\mathbb{Q}_3)$ over $(1,\pm{1})\in C_3'(\mathbb{F}_3)$, or equivalently the residue classes of $C_3(\mathbb{Q}_3)$ over $(2,\pm{1})\in C_3(\mathbb{F}_3)$, contain no rational points. To do this, we use the Mordell-Weil sieve \cite{MW-Sieve}. 

In its simplest form, the Mordell-Weil sieve is a procedure for ruling out rational points in residue classes in the following way: let $S$ be a set of primes of good reduction, let $N$ be an auxiliary integer, and consider the commutative diagram
\begin{displaymath}
    \xymatrixcolsep{5pc}\xymatrix{ \ar[d]^{\pi_{S}} C'_3(\QQ)\ar[r]^{\iota\;\;\;\;\,}&J_3'(\QQ)/NJ_3'(\QQ) \ar[d]^{\alpha_{S}} \\
               \displaystyle\prod_{q\in S}C_3'(\mathbb{F}_q)\ar[r]^{\beta_{S}\;\;\;\;\,}&\displaystyle\prod_{q\in S}J_3'(\mathbb{F}_q)/NJ_3'(\mathbb{F}_q)}
\end{displaymath}            
with the horizontal maps induced by the Abel-Jacobi map $w\rightarrow [w-\infty_+]$  and the vertical maps induced by reduction. Assuming we have generators of $J_3'(\QQ)$, we can compute the images of $\alpha_{S}$ and $\beta_{S}$ explicitly. Therefore, to rule out the existence of $p'\in C_3'(\QQ)$ such that $\pi_{q_0}(p')=\overline{p'}$ for some fixed $q_0\in S$ and some $\overline{p'}\in C_3'(\mathbb{F}_q)$, we just need to check that
\[\beta_{S}\Big(\{\overline{p'}_{q_0}\}\;\,\times \displaystyle\prod_{q\in S\,\mysetminus \{q_0\}}C_3'(\mathbb{F}_q)\Big)\;\bigcap\;\alpha_{S}\Big(J_3'(\QQ)\Big)=\varnothing.\]    
On the other hand, this remains true when instead of $J_3'(\QQ)$ we work with a subgroup $G_3'$ of finite index such that the index is coprime to $\#J_3'(\mathbb{F}_q)$ for $q\in S$. This property can be checked for any given potential prime divisor $\ell$ of the index by verifying that
\[G'/\ell G'\rightarrow \prod_{\ell'\in S_\ell} J_3'(\mathbb{F}_{\ell'})/\ell J_3'(\mathbb{F}_{\ell'})\]
is injective for some set $S_\ell$ of good primes. In particular, for $C_3'$ the set of primes 
\[S:=\{3, 11, 13, 17, 19, 61, 331, 379, 409, 947, 1201\}\]
and the integer
\[N=28745640=2^3\cdot3^2\cdot5\cdot7\cdot11\cdot17\cdot61\vspace{.075cm}\]
are sufficient to prove that the only residue classes in $C_3'(\mathbb{F}_3)$ that contain rational points are the points at infinity; here the set $S$ was chosen since the prime divisors $\ell$ of $J_3'(\mathbb{F}_p)$ are small, i.e $\ell\leq67$ for all $p\in S$. The computational details for this argument can be found in our file \texttt{Hindes-Curve.Magma} at the link above. This completes the proof of Lemma \ref{lem:d=5}.       
\end{proof} 
\begin{lem}{\label{lem:d=-5}} Let $C_{4}$ be the hyperelliptic curve given by \vspace{.075cm}  
\[C_{4}: y^2=-5\cdot(x^8 + 12x^7 + 62x^6 + 182x^5 + 335x^4 + 398x^3 + 299x^2 + 131x + 25).\vspace{.075cm}  \] 
Then $C_{4}(\mathbb{Q})=\{(-7/3,\pm{185}/81)\}$ is a complete list of rational points.
\end{lem}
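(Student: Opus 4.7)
The plan is to follow the Chabauty--Coleman strategy used for $C_1$, $C_2'$, and $C_3'$ in Lemmas~\ref{lem:d=1}, \ref{lem:d=-1}, and \ref{lem:d=5}. First I would pass to a model $C_4'$ of $C_4$ with rational points at infinity by translating the known rational point pair $(-7/3, \pm 185/81)$ to infinity via a birational change of variables of the shape $(x,y) \mapsto (1/(x + 7/3),\, y/(x + 7/3)^4)$ (rescaled to clear denominators). It then suffices to show $C_4'(\QQ) = \{\infty_+, \infty_-\}$. Let $J_4'$ denote the Jacobian. A two-descent in \texttt{Magma} should bound $\rank J_4'(\QQ) \leq 2$, and computing $\gcd\bigl(\#J_4'(\mathbb{F}_{p}),\#J_4'(\mathbb{F}_{q})\bigr) = 1$ for two convenient primes of good reduction should establish that $J_4'(\QQ)$ is torsion-free, as in the previous lemmas.

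Next I would produce two independent classes $P_4', Q_4' \in J_4'(\QQ)$ generating a finite-index subgroup. The divisor $Q_4' := [\infty_- - \infty_+]$ is immediate; however, because we started with only a single non-trivial rational point on $C_4$, the rational points alone need not give a second independent class, so as in Lemma~\ref{lem:d=5} a computer search for a divisor class of degree $3$ (via a Mumford representation with small height) will likely be required. Independence and the finite-index property can then be verified by checking that the images of $P_4'$ and $Q_4'$ generate a subgroup of the expected shape in $J_4'(\mathbb{F}_p) \times J_4'(\mathbb{F}_q)$ for two well-chosen primes.

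With generators in hand, I would perform Chabauty--Coleman at a small prime $p$ of good reduction. Following the script from the earlier lemmas, I would build two rational divisors $D_1', D_2'$ in the kernel of reduction mod $p$ by taking appropriate integer combinations of $P_4'$ and $Q_4'$ (scaled so that the cubic polynomials in their Mumford representations split completely in $\QQ_p[x]$), then use the \texttt{coleman_integral()} function in \texttt{Sage} to compute $\int_0^{D_i'} \eta_k$ for the basis $\eta_k = x^k\,dx/2y$, $0\leq k\leq 2$, of $\Omega_{C_4'}^1(\QQ_p)$. Solving the resulting linear system mod $p$ yields the reduction $\overline{\omega}$ of an annihilating $1$-form, and \cite[Lemma 5.1]{Poonen-McCal} together with (if needed) \cite[Remark 5.2]{Poonen-McCal} controls the number of rational points in each residue class of $C_4'(\mathbb{F}_p)$.

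The main obstacle I anticipate is two-fold: first, finding the auxiliary independent generator, since $C_4$ has only one rational affine point and a successful search requires a non-trivial hunt over the Jacobian; and second, if it happens (as in Lemma~\ref{lem:d=5}) that the reduction $\overline{\omega}$ does not immediately exclude non-trivial residue classes at a chosen prime, then I would need to invoke the Mordell-Weil sieve exactly as in the proof of Lemma~\ref{lem:d=5}, choosing a set of auxiliary primes $S$ and modulus $N$ so that the image of $J_4'(\QQ)/N J_4'(\QQ)$ in $\prod_{q \in S} J_4'(\mathbb{F}_q)/N J_4'(\mathbb{F}_q)$ avoids the images of the unwanted residue classes. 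Either the Chabauty step alone or Chabauty together with a sieve should pin down $C_4'(\QQ) = \{\infty_+, \infty_-\}$, which translates back to $C_4(\QQ) = \{(-7/3, \pm 185/81)\}$.
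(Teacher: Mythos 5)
Your plan matches the paper's proof of Lemma~\ref{lem:d=-5} in all essentials: pass to $C_4'$ via $(x,y)\mapsto(1/(x+7/3),y/(x+7/3)^4)$, bound the rank by two-descent, rule out torsion via $\gcd(\#J_4'(\mathbb{F}_{11}),\#J_4'(\mathbb{F}_{43}))=1$, locate a second independent class $P_4$ by computer search, run Chabauty--Coleman at $p=3$, and finish with a Mordell--Weil sieve at $S=\{3,7\}$ to kill the residue classes over $(0,\pm 1)$. The only wrinkle you did not anticipate is that $C_4'$ has bad reduction at $3$, so the paper pushes the relevant divisors back to $C_4$ (which has good reduction) before computing the Coleman integrals --- a small technical adjustment you would discover immediately in practice.
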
  
\begin{proof}
Let $J_{4}$ be the Jacobian of $C_{4}$. As in the previous case, it is useful to pass back and forth between $C_{4}$ and a different model with rational points at infinity. Specifically, let \vspace{.15cm} 
\[C_{4}': \;\,y^2=\frac{34225}{6561}x^8 - \frac{22895}{2187}x^7 + \frac{29515}{729}x^6 - \frac{4910}{243}x^5 - \frac{5825}{81}x^4 + \frac{3430}{27}x^3 - \frac{830}{9}x^2 + \frac{100}{3}x - 5 .\vspace{.15cm}\]
Then $(x,y)\rightarrow(1/(x+7/3),y/(x+7/3)^4)$ is a birational map from $C_{4}$ and $C_{4}'$, and it suffices to show that 
\[C_{4}'(\mathbb{Q})=\{\infty_{+},\infty_-\} \vspace{.1cm}\] 
to prove Lemma \ref{lem:d=-5}. Let $J'_{4}$ be the Jacobian of $C_{4}'$. A two-descent with \texttt{Magma} shows that $J'_{4}(\mathbb{Q})$ has rank at most two. Moreover, we compute that 
\[\gcd\big(\#J'_{4}(\mathbb{F}_{11}),\#J'_{4}(\mathbb{F}_{43})\big)=1, \vspace{.075cm}\] 
and hence $J'_{4}(\mathbb{Q})$ has trivial torsion  \cite[Appendix]{Katz}. Now let \vspace{.05cm} 
\[P_4=p_1+p_2+p_3+\Big(\frac{-7}{3},\frac{185}{81}\Big)-2(I_-+I_+)\;\;\;\text{and}\;\;\;\; Q_4=\Big(\frac{-7}{3},\frac{185}{81}\Big)-\Big(\frac{-7}{3},\frac{-185}{81}\Big) \vspace{.1cm}\] 
be rational divisors in $J_{4}(\mathbb{Q})$; here $p_1$, $p_2$, and $p_3\in C_{4}(\overline{\mathbb{Q}})$ are points with $x$-coordinates supported on $p(x)=x^3 + 11/3x^2 + 38/7x + 23/7$, and $I_-$ and $I_+$ are the two points at infinity on $C_{4}$, defined over a quadratic extension (once again, the point $P_4$ was found using a computer search). 

Let $P_4'$ and $Q_4'$ denote the images of $P_4$ and $Q_4$ in $J_{4}'(\mathbb{Q})$ respectively (their Mumford representations are quite large). Then $P_4'$ and $Q_4'$ generate a group isomorphic to $\mathbb{Z}/3\mathbb{Z}\times \mathbb{Z}/531\mathbb{Z}$ in $J'_{4}(\mathbb{F}_{11})$. Hence, $J'_{4}(\mathbb{Q})$ is free of rank two, and $P_4'$ and $Q_4'$ generate a finite index subgroup.    

We now proceed with the method of Chabauty and Coleman at the prime $p=3$. Let \vspace{.075cm}
\[R_1':=2P_4'+9Q_4',\;\;R_2':=19P_4'+100Q_4',\;\;R_3':=-28P_4'-51Q_4'.\vspace{.075cm}\] 
The Mumford representations $[a'_i(x),b'_i(x)]$ of $R'_i$ have the property that the cubic polynomials $a'_i(x)$ split completely in $\mathbb{Q}_3[x]$. Therefore, we have that \vspace{.075cm} 
\[R_i'=[p'_{i1}+p'_{i2}+p'_{i3}+\infty_-]-2[\infty_++\infty_-] \vspace{.075cm}\]  
for some $p'_{ij}\in C_{4}'(\mathbb{Q}_3)$. In particular, one checks that the divisors $D_1':=R'_1-R'_2$ and $D_2:=R'_1-R'_3$ are supported away from infinity, in the kernel of the map, and\vspace{.075cm}  
\begin{align*}
D'_1=&\;[p'_{11}-p'_{21}]+[p'_{12}-p'_{22}]+[p'_{13}-p'_{23}],\\ 
D'_2=&\;[p'_{11}-p'_{31}]+[p'_{12}-p'_{32}]+[p'_{13}-p'_{33}]\vspace{.05cm}   
\end{align*} 
are the sums of differences of points in the same residue class. 

However, since $C_{4}'$ has bad reduction at $p=3$, it is easier for computational purposes to push these divisors back to $J_{4}$, which has good reduction. Let $D_1$ and $D_2\in J_{4}(\mathbb{Q})$ denote the images of $D_1'$ and $D_2'$ respectively. Letting $\eta_k=x^k\,dx/2y$ for $0\leq k\leq 2$ of $\Omega_{C_{4}}^1(\mathbb{Q}_3)$, we compute with the \texttt{coleman_integral()} function in \texttt{Sage} that \vspace{.075cm}
\[\Big(\int_0^{D_1}\eta_k\Big)_{0\leq k\leq2}=\Big(\, \sum_{i=1}^3\int_{p_{2i}}^{p_{1i}}\eta_k \,\Big)_{0\leq k\leq2}=\big(\,2\cdot3+3^2+O(3^3),\; 3^2 +O(3^3) ,\;3^2 + O(3^3) \,\big).\] 
and that 
\[\Big(\int_0^{D_2}\eta_k\Big)_{0\leq k\leq2}=\Big(\, \sum_{i=1}^3\int_{p_{3i}}^{p_{1i}}\eta_k \,\Big)_{0\leq k\leq2}=\big(\,O(3^3),\; 3^2 + O(3^5),\; 2\cdot3^2+O(3^3)\,\big).\vspace{.1cm}\] 
Writing $\omega=(c_2x^2+c_1x+c_0)\, dx/2y$ for some $c_0,c_1, c_2\in\mathbb{Z}_3$, we see that \vspace{.05cm} 
\[c_0\equiv0\Mod{3}\;\;\;\text{and}\;\;\; c_1+2c_2\equiv0\Mod{3}.\] 
Hence, $c_1\equiv c_2\Mod{3}$, and up to an irrelevant scalar, $\omega$ reduces to 
\[\overline{\omega}=\frac{(x^2+x)dx}{2y}\in\Omega_{C_{4}}^1(\mathbb{F}_3).\]  
Therefore, \cite[Lemma 5.1]{Poonen-McCal} implies that each residue class at infinity contains at most one rational point. In particular, it suffices to show that the residue classes of $C_{4}(\mathbb{Q}_3)$ over $(0,\pm{1})\in C_{4}(\mathbb{F}_3)$ contain no rational points. To do this, we use the Mordell-Weil sieve; moreover, we can provide more details in this case, since it is less complicated than the procedure for $C_3$. 

Let $G_4$ be the subgroup of $J_4(\QQ)$ generated by the divisors $P_4$ and $Q_4$ above. Since we cannot be sure that we capture the full Mordell-Weil group with $G_4$, we first show that the index $[J_4(\QQ):G_4]$ is not divisible by the small primes in $S'=\{2,3,5,11\}$. This is relatively easy: for each $\ell\in S'$, we produce an auxiliary set of primes $S_\ell$ such that the induced map 
\[G_4/\ell G_4\rightarrow\prod_{\ell'\in S_\ell} J_4(\mathbb{F}_{\ell '})/ \ell J_4(\mathbb{F}_{\ell'})\]
is injective. It is straightforward to verify with \texttt{Magma} that the sets $S_2=\{3,13\}$, $S_3=\{3,23\}$, $S_5=\{3,43\}$, and $S_{11}=\{3,7,17\}$ satisfy this property. In particular, if $\overline{(G_4)}_q$ and $\overline{J_4(\QQ)}_q$ denote the images of $G_4$ and $J_4(\QQ)$ in $J_4(\mathbb{F}_q)$ respectively, then it follows from our exclusion of the small indices in $S'$ that $\overline{(G_4)}_q=\overline{J_4(\QQ)}_q$ for all $q\in S=\{3,7\}$; the upshot of this step is that it allows us to be sure that any local information gained by reducing $J_4(\mathbb{Q})$ modulo $q\in S$ is captured instead by reducing $G_4$, which is concrete and explicitly known.

We now run the Mordell-Weil sieve at the primes in $S=\{3,7\}$ and the integer $N=0$. That is, we consider the commutative diagram \begin{displaymath}
    \xymatrix{ \ar[d]^{\pi_{S}} C_4(\QQ)\; \ar[r]^{\iota} & \;J_4(\QQ) \ar[d]^{\alpha_{S}} \\
                C_4(\mathbb{F}_3)\times C_4(\mathbb{F}_7)\,\ar[r]^{\beta_{S}} & \,J_4(\mathbb{F}_3)\times J_4(\mathbb{F}_7) }
\end{displaymath}            
with horizontal maps induced by $w\rightarrow[2w-(I_-+I_+)]$ and vertical maps induced by reduction. In particular, we rule out the existence of rational points $w\in C_4(\QQ)$ in the residue classes over $(0,\pm{1})\in C_4(\mathbb{F}_3)$ by verifying with \texttt{Magma} that 
\[\beta_{S}\Big(\{(0,\pm{1})\}\;\,\times C_4(\mathbb{F}_7)\Big)\;\bigcap\;\alpha_{S}\big(G_4\big)=\varnothing.\]
This completes the proof of Lemma~\ref{lem:d=-5}.     
\end{proof}  
\section{Dynamical Galois Groups: $\phi_c(x)=x^2+c$ \textup{and} $b=0$}{\label{sec:rationalpoints2}
We now use Corollary \ref{cor:curve} to prove Theorem \ref{thm:small5th}. Specifically, let $\phi(x)=\phi_t(x)=x^2+t$ and let $n=5$; here again, $t$ is an indeterminate. Then we have 
\begin{align*}
-\phi(0)&=-t,\\
\phi^2(0)&=t^2 + t,\\
\phi^3(0)&=(t^2+t)^2+t,\\
\phi^4(0)&= ((t^2+t)^2+t)^2+t,\\  
\phi^5(0)&= (((t^2+t)^2+t)^2+t)^2+t. \vspace{.1cm} 
\end{align*} 
In particular, we must compute the integral points on the (singular) curves, 
\[C_{\phi,5}^{(\epsilon)}: y^2=-\phi(0)^{\epsilon_1}\cdot\phi^2(0)^{\epsilon_2}\cdot \phi^3(0)^{\epsilon_3}\cdot \phi^{4}(0)^{\epsilon_{4}}\cdot\phi^5(0)\]
for $\epsilon\in\mathbb{F}_2^{4}$. However, unlike the corresponding polynomials in section \ref{sec:rationalpoints}, these polynomials have common factors, which we must divide out appropriately, before we can compute integral points. Moreover, there is also the problem that, apriori, we must compute the integral points on $16$ curves. However, because the orbit $\Orb_{\phi}(0)=\{\phi^n(0)\}_{n\geq1}$ is a rigid divisibility sequence \cite[Lemma 12]{Rafe-Spencer}, we can cut down the number of curves to $2$: \vspace{.03cm}     
\begin{thm}{\label{thm:prime}} Let $\phi_c(x)=x^2+c$ for some $c\in\mathbb{Z}$ and let $p\geq3$ be an odd prime. If $c$ does not give an integral point on either of the hyperelliptic curves \vspace{.1cm}
\[\mathcal{C}_{p}:\, y^2=1/t\cdot \phi_t^p(0)\;\;\;\;\;\text{and}\;\;\;\;\;\mathcal{C}_p^{(-1)}:\, -y^2=1/t\cdot \phi_t^p(0),\vspace{.125cm}\] 
then $G_{p-1}(\phi_c)=\Aut(T_{p-1})$ implies $G_{p}(\phi_c)=\Aut(T_{p})$. \vspace{.015cm}  
\end{thm}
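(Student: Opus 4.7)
The plan is to argue by contrapositive. Assume that $G_{p-1}(\phi_c)=\Aut(T_{p-1})$ but $G_p(\phi_c)\neq\Aut(T_p)$; the goal is to show that $c$ gives an integral point on $\mathcal{C}_p$ or $\mathcal{C}_p^{(-1)}$, which amounts to verifying that $\phi_c^p(0)/c=\pm z^2$ for some $z\in\ZZ$.

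First, Corollary \ref{cor:curve}, applied with $\gamma=0$ and $n=p$, produces a vector $\epsilon \in \mathbb{F}_2^{p-1}$ and a value $y_p \in \QQ$ such that
\[(-c)^{\epsilon_1}\cdot\phi_c^2(0)^{\epsilon_2}\cdots\phi_c^{p-1}(0)^{\epsilon_{p-1}}\cdot\phi_c^p(0) \;=\; y_p^2.\]
Since the left side is an integer, $y_p\in\ZZ$, and so the $q$-adic valuation of the left side is even for every prime $q$.

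The key reduction is to exploit the rigid divisibility of the orbit $\{\phi_c^n(0)\}_{n\geq 1}$ from \cite[Lemma 12]{Rafe-Spencer}: for every prime $q$ dividing some $\phi_c^n(0)$, if $m_q\geq 1$ denotes the minimal such index, then $q\mid\phi_c^n(0)$ iff $m_q\mid n$, and $v_q(\phi_c^n(0))=v_q(\phi_c^{m_q}(0))$ whenever $m_q\mid n$. Since $p$ is prime, the possible values of $m_q$ split into three cases: $m_q=1$ (equivalently, $q\mid c$), $1<m_q<p$, and $m_q=p$. In the first case $v_q(\phi_c^n(0))=v_q(c)$ for every $n\geq 1$, so $v_q(\phi_c^p(0)/c)=0$. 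In the second case $m_q\nmid p$, so $v_q(\phi_c^p(0))=0$; combined with $q\nmid c$ (else $m_q=1$), this again yields $v_q(\phi_c^p(0)/c)=0$. In the third case $q$ divides only the last factor $\phi_c^p(0)$ in the displayed product, so $v_q(y_p^2)=v_q(\phi_c^p(0))$, forcing $v_q(\phi_c^p(0))$ to be even; since $q\nmid c$, this shows $v_q(\phi_c^p(0)/c)$ is even as well.

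Combining the cases, $v_q(\phi_c^p(0)/c)$ is even for every prime $q$, so $\phi_c^p(0)/c=\pm z^2$ for some $z\in\ZZ$; if the sign is $+$, then $(c,z)$ lies on $\mathcal{C}_p$, and otherwise on $\mathcal{C}_p^{(-1)}$. The argument is essentially mechanical once rigid divisibility is in hand; the main conceptual point, and the only real obstacle to generalizing the statement to composite exponents, is that primality of $p$ forces the ``middle'' indices $1<m_q<p$ to be irrelevant, collapsing the multi-term square condition of Corollary \ref{cor:curve} into the single constraint on $\phi_c^p(0)/c$ encoded by the two curves $\mathcal{C}_p$ and $\mathcal{C}_p^{(-1)}$.
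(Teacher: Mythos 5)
Your proof is correct and takes essentially the same route as the paper: the paper simply cites Stoll's Theorem~1.7, whose quantity $b_n$ is the primitive part of $\phi_c^n(0)$ and equals $-\phi_c^p(0)/c$ precisely because $p$ is prime, while you reconstruct this reduction directly from the rigid-divisibility lemma of Hamblen--Jones--Madhu together with Corollary~\ref{cor:curve}. The case analysis on $m_q$ is exactly the mechanism hiding inside Stoll's argument, so your version is a welcome self-contained check rather than a genuinely different approach.
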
 
\begin{proof} It follows from \cite[Theorem 1.7]{Stoll-Galois} and its proof that if $G_{n-1}(\phi_c)=\Aut(T_{2,n-1})$ and $G_{n}(\phi_c)\neq\Aut(T_{2,n})$, then $\pm{b}_n$ is a square (in the notation of \cite{Stoll-Galois}). Moreover, if $n=p$ is prime, then $b_p=-\phi_c^p(0)/c$ and the claim follows.  
\end{proof} 
We are now ready to finish the proof of Theorem \ref{thm:small5th}. To do this, we succeed in finding all rational points on a hyperelliptic curve of genus $7$, whose Jacobian has rank $5$, a task which might be some independent interest for those studying computational arithmetic geometry.     
\begin{proof}[(Proof of Theorem \ref{thm:small5th})] It follows from \cite[Theorem 1.1]{Me:4th}, Corollary \ref{cor:curve} and Theorem \ref{thm:prime}, that to prove Theorem \ref{thm:small5th}, we must classify the integral points on the hyperelliptic curves 
\[ \mathcal{C}_1:\; y^2=g(t)=1/t\cdot \phi_t^5(0), \;\;\;\;\; \mathcal{C}_2:\; y^2=-g(t)=-1/t\cdot \phi_t^5(0)\]
of genus $7$. In fact, we prove that there are no unknown rational points on these curves and catalogue this information in Lemma \ref{lem:5th1} and Lemma \ref{lem:5th2} below. However, in the interest of keeping these results self-contained, we use standard notation for hyperelliptic curves and replace the variable $t$ with the (more standard) variable $x$ in what follows.  
\end{proof} 
\begin{lem}{\label{lem:5th1}} Let $\mathcal{C}_1$ be the hyperelliptic curve given by \vspace{.15cm}
\begin{align*} \mathcal{C}_1:\; y^2=x^{15} + 8x^{14} + &28x^{13} + 60x^{12} + 94x^{11} + 116x^{10} + 114x^9 + 94x^8+\\ 
&69x^7 + 44x^6 + 26x^5 + 14x^4 + 5x^3 + 2x^2 + x + 1.
\end{align*} 
Then $\mathcal{C}_1(\mathbb{Q})=\{\infty, (-1,\pm{1}), (0,\pm{1})\}$ is a complete set of rational points. 
\end{lem}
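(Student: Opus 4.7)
The plan is to adapt the Chabauty--Coleman/Mordell--Weil sieve framework developed in Lemmas \ref{lem:d=1}--\ref{lem:d=-5} to this genus~$7$ curve. Since $\mathcal{C}_1$ has odd degree $15$, it carries a unique rational point $\infty$, which is a Weierstrass point, and we get a canonical Abel--Jacobi embedding $P\mapsto[P-\infty]$ into the Jacobian $J_1$. The first step is to bound $\operatorname{rank} J_1(\mathbb{Q})\le 5$ via a two-descent in \texttt{Magma}, and to check that $J_1(\mathbb{Q})$ is torsion-free by computing $\gcd\bigl(\#J_1(\mathbb{F}_{p_1}),\#J_1(\mathbb{F}_{p_2})\bigr)=1$ for two small primes of good reduction, as in the previous lemmas.

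Next I would exhibit five independent classes realizing the full rank. The obvious classes $[(-1,\pm 1)-\infty]$ and $[(0,\pm 1)-\infty]$ provide at most four; a targeted point search over small height should produce an additional divisor (likely a Mumford-form class supported on a rational cubic factor $a(x)$, analogous to $P_3'$ and $P_4$ in the proofs above). Independence of these five classes is then verified by reducing modulo suitable primes and checking that their images generate a subgroup of sufficiently large order in $J_1(\mathbb{F}_{\ell_1})\times J_1(\mathbb{F}_{\ell_2})$.

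Since the genus exceeds the rank by $g-r=2$, Chabauty--Coleman yields a two-dimensional space of regular $1$-forms $\omega\in\Omega^1_{\mathcal{C}_1}(\mathbb{Q}_p)$ annihilating $J_1(\mathbb{Q})$ under integration. I would work at $p=3$ (as in the previous lemmas), first massaging each generator $D_i$ of the Mordell--Weil subgroup into a sum of differences of $3$-adic points lying in the same residue class by adding appropriate multiples of divisors in the kernel of reduction, then applying the \texttt{coleman\_integral()} function in \texttt{Sage} against the basis $\eta_k=x^k\,dx/2y$ for $0\le k\le 6$. The resulting linear system will cut out a two-dimensional reduction $\overline{\Omega}\subset\Omega^1_{\mathcal{C}_1}(\mathbb{F}_3)$ of annihilating differentials. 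Using \cite[Lemma~5.1]{Poonen-McCal} and its $\operatorname{ord}$-refinement \cite[Remark~5.2]{Poonen-McCal}, one counts zeros of elements of $\overline{\Omega}$ in each residue class of $\mathcal{C}_1(\mathbb{F}_3)$ to bound the number of rational points in each.

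The main obstacle is twofold. First, because $g-r=2$ rather than $1$, one cannot simply pin down a single $\overline{\omega}$ up to scalar; instead the residue-class bounds must be established uniformly over the full pencil of annihilating differentials, which in turn demands high $p$-adic precision in the Coleman integrals. Second, it is likely that Chabauty--Coleman alone will not close out all residue classes, so a Mordell--Weil sieve (as in Lemmas \ref{lem:d=5} and \ref{lem:d=-5}) will be needed to eliminate the remaining classes; this requires first ruling out small prime divisors of the index $[J_1(\mathbb{Q}):G]$ by injectivity checks $G/\ell G\hookrightarrow\prod_{\ell'\in S_\ell} J_1(\mathbb{F}_{\ell'})/\ell J_1(\mathbb{F}_{\ell'})$, and then choosing auxiliary primes $S$ and modulus $N$ so that the image of $\alpha_S$ misses the bad residue classes in the commutative diagram of the Abel--Jacobi reductions.
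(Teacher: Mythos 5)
Your proposal follows the paper's proof closely: two-descent to bound the rank at $5$, the gcd test for trivial torsion, five explicit Mumford divisors with independence verified modulo small primes, and Chabauty--Coleman at $p=3$ against the standard basis $\eta_k$. Two corrections to the details. First, having $g-r=2$ is an advantage, not an obstacle: you are not required to establish bounds ``uniformly over the full pencil'' of annihilating differentials, but only, for each residue class of $\mathcal{C}_1(\mathbb{F}_3)$, to exhibit \emph{one} annihilating differential not vanishing there; the paper finds $\overline{\omega}_1$ nonvanishing at $\overline{\infty}$ and $(-1,\pm 1)$ and $\overline{\omega}_2$ nonvanishing at $(0,\pm 1)$, so \cite[Lemma 5.1]{Poonen-McCal} already pins down exactly one rational point per class and no Mordell--Weil sieve is needed for this curve (the sieve enters only in Lemmas \ref{lem:d=5} and \ref{lem:d=-5}). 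Second, the divisors used for the tiny-integral decomposition are not the original generators ``massaged'' by adding kernel elements, but rather an LLL-reduced short basis $\mathcal{D}_1,\dots,\mathcal{D}_5$ of the kernel of $J_1(\mathbb{Q})\to J_1(\mathbb{F}_3)$ itself, which makes each one automatically a sum of differences of $3$-adic points lying in a single residue class.
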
 
\begin{proof} Let $\mathcal{J}_1$ be the Jacobian $\mathcal{C}_1$. A two-descent with \texttt{Magma} shows that $\mathcal{J}_1(\mathbb{Q})$ has rank at most five. Moreover, we compute that 
\[\gcd\big(\#\mathcal{J}_{1}(\mathbb{F}_{7}),\#\mathcal{J}_{1}(\mathbb{F}_{19})\big)=1,\]
and hence $\mathcal{J}_1(\QQ)$ has trivial torsion \cite[Appendix]{Katz}. On the other hand, consider the five rational divisors with Mumford representations: 
\begin{align*} 
&\mathcal{P}_1:=[x+1,-1],\;\;\;\, \mathcal{Q}_1:=[x,1], \;\;\;\, \mathcal{R}_1:=[x^2 + 1, -x],
\\
&\\ 
&\mathcal{U}_1:=[x^2 + x - 1, -2x - 3],\;\; \mathcal{V}_1:=[x^4 - x - 1,10x^3 + 14x^2 + 16x + 9], 
\end{align*}  
and let $\mathcal{G}_1$ be the subgroup they generate in $\mathcal{J}_1(\mathbb{Q})$. Then we compute that the image of $\mathcal{G}_1$ via the reduction homomorphism 
\[\mathcal{J}_1(\mathbb{Q})\;\longrightarrow\; \prod_{\mathclap{\;\;\;\;\;\;\;\;\;\;\;\;\;\; p\in \{11,17,19,41,43,53,59,61\}}}\;\mathcal{J}_1(\mathbb{F}_{p})\] 
is isomorphic to 
\begin{align*}
&\hspace{.5cm}\Scale[0.95]{\mathbb{Z}/4\mathbb{Z}\times \mathbb{Z}/4\mathbb{Z} \times \mathbb{Z}/12\mathbb{Z} \times \mathbb{Z}/30192\mathbb{Z}\; \times}\\ 
&\hspace{.5cm}\Scale[0.95]{\mathbb{Z}\big/266359966347730293793903674442891487539799606677968895688846898339666912\mathbb{Z}},
\end{align*} 
which needs five generators. Therefore, since $\mathcal{J}_1(\mathbb{Q})$ has trivial torsion, $\mathcal{J}_1(\mathbb{Q})$ must have rank $5$, and $\mathcal{G}_1$ is a finite index subgroup. 

We proceed now with the method of Chabauty and Coleman at the prime $p=3$. To find divisors suitable for calculation, we look within the kernel of the map $\mathcal{J}_1(\mathbb{Q})\rightarrow\mathcal{J}_1(\mathbb{F}_3)$ given by reduction. In particular, we use the LLL algorithm to compute a short basis of this kernel:
\begin{align*} 
\mathcal{D}_{1}:=2&\mathcal{Q}_1+3\mathcal{U}_1+2\mathcal{V}_1,\;\;\;\, \mathcal{D}_2:=4\mathcal{P}_1+\mathcal{Q}_1-2\mathcal{R}_1+\mathcal{U}_1-3\mathcal{V}_1,\;\;\;\,\mathcal{D}_3:=4\mathcal{P}_1+\mathcal{Q}_1+4\mathcal{R}_1-\mathcal{U}_1,\\
&\\
& \mathcal{D}_4:=3\mathcal{P}_1+\mathcal{Q}_1-5\mathcal{R}_1-\mathcal{U}_1+3\mathcal{V}_1,\,\;\;\mathcal{D}_5:=\mathcal{P}_1-6\mathcal{Q}_1+\mathcal{R}_1+2\mathcal{U}_1+3\mathcal{V}_1.\vspace{.1cm}
\end{align*} 
Let $\pi_3:\mathcal{C}_1(\overline{\mathbb{Q}})\rightarrow\mathcal{C}_1(\overline{\mathbb{F}}_3)$. Then we compute a decomposition of the divisors $\mathcal{D}_i$ satisfying:    
\[\;\;\mathcal{D}_i=\sum_{j=1}^7x_{ij} - 7\,(-1,1), \;\;\;\;\;\;\;\text{for some}\;\, x_{ij}\in\mathcal{C}_1(\overline{\mathbb{Q}}),\;\;\; \pi_3(x_{ij})=(-1,1).\]
In particular, we can compute the integrals of the $\mathcal{D}_i$ on the standard basis $\eta_k=x^k\,dx/2y$ for $0\leq k\leq 6$ of $\Omega_{\mathcal{C}_{1}}^1(\mathbb{Q}_3)$ as a sum of ``tiny integrals"  \cite[\S3.2]{Jennifer} using a uniformizer in the residue class of $(-1,1)$. Explicitly,  
\[\Big(\int_{0}^{\mathcal{D}_j}\eta_i\Big)_{0\leq i\leq 6, 1\leq j\leq 5}=\Scale[0.73]{
\begin{pmatrix}
-62\cdot3 + O(3^6) &\, 3^2 + O(3^6) &\, -20\cdot3^2 + O(3^6) & \,O(3^6) &\, 32\cdot3^2 + O(3^6)\\ 
-106\cdot3 + O(3^6) &\, 34\cdot3^2 + O(3^6) &\, 109\cdot3 + O(3^6) &\, 73\cdot3 + O(3^6) &\, -119\cdot3 + O(3^6)\\
-74\cdot3 + O(3^6) &\, 116\cdot3 + O(3^6) &\, 50\cdot3 + O(3^6) &\, -20\cdot3 + O(3^6) &\, -79\cdot3 + O(3^6)\\
-35\cdot3^2 + O(3^6) &\, 4\cdot3^3 + O(3^6) &\, 106\cdot3 + O(3^6) &\, 31\cdot3 + O(3^6) &\, 50\cdot3 + O(3^6)\\
-101\cdot3 + O(3^6)&\, -47\cdot3 + O(3^6) &\, 85\cdot3 + O(3^6) &\, -11\cdot3 + O(3^6) &\, -4\cdot3^4 + O(3^6)\\
37\cdot3 + O(3^6) &\, -101\cdot3 + O(3^6) &\, -11\cdot3 + O(3^6) &\, -89\cdot3 + O(3^6) &\, -61\cdot3 + O(3^6)\\
101\cdot3 + O(3^6) &\, 41\cdot3 + O(3^6) &\, 31\cdot3^2 + O(3^6) &\, -16\cdot3 + O(3^6) &\, 2\cdot3 + O(3^6)
\end{pmatrix}} \vspace{.1cm}\]
This information implies that the reductions mod $3$ of the (suitably scaled) annihilating differentials of $\mathcal{J}_1(\QQ)$ fill the subspace of $\Omega_{\mathcal{C}_{1}}^1(\mathbb{F}_3)$ spanned by 
\[\overline{\omega}_1=\big(x^6 + x^3 + 2x^2 + x\big)\frac{dx}{2y} \;\;\;\;\;\; \text{and}\;\;\;\;\;\;\; \overline{\omega}_2=(x^5 + 2x^4 + x^3 + 2x + 2)\frac{dx}{2y}.\;\;\;\;\;\;\] 
On the other hand, the reduction map $\pi_3: \mathcal{C}_1(\QQ)\rightarrow\mathcal{C}_1(\mathbb{F}_3)$ on rational points is a surjection:
\[\mathcal{C}_1(\mathbb{F}_3)=\{\overline{\infty}, (0,\pm{1}),(-1,\pm{1})\}.\]
Moreover, each residue class contains exactly one rational point, since $\overline{\omega}_1$ is non-vanishing on $\overline{\infty}$ and $(-1,\pm{1})$ and $\overline{\omega}_2$ is non-vanishing on $(0,\pm{1})$; see \cite[Lemma 5.1]{Poonen-McCal}. In particular, we have indeed found a complete list of rational points for $\mathcal{C}_1$.      
\end{proof}
 \begin{lem}{\label{lem:5th2}} Let $\mathcal{C}_2$ be the hyperelliptic curve given by \vspace{.15cm}
\begin{align*} 
\mathcal{C}_2:\; y^2=-(x^{15} + 8x^{14} + 28x^{13} + &60x^{12} + 94x^{11} + 116x^{10} + 114x^9 +94x^8+ \\  
&69x^7 + 44x^6 + 26x^5 + 14x^4 + 5x^3 + 2x^2 + x + 1).
\end{align*} 
Then $\mathcal{C}_2(\mathbb{Q})=\{\infty, (-2,\pm{1})\}$ is a complete set of rational points. 
\end{lem}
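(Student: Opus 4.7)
The plan is to follow the same Chabauty--Coleman strategy used in Lemma \ref{lem:5th1}, with a Mordell-Weil sieve held in reserve for any residue classes the differentials fail to rule out. First, I would run a two-descent in \texttt{Magma} to verify that $\rank \mathcal{J}_2(\QQ)\leq 5$, and then check that $\gcd(\#\mathcal{J}_2(\mathbb{F}_{p_1}),\#\mathcal{J}_2(\mathbb{F}_{p_2}))=1$ for two small primes of good reduction, so that $\mathcal{J}_2(\QQ)$ is torsion-free by \cite[Appendix]{Katz}.

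Next comes the step I expect to be the main obstacle: producing five independent elements of $\mathcal{J}_2(\QQ)$. Unlike $\mathcal{C}_1$, the curve $\mathcal{C}_2$ has only the three known rational points $\infty$ and $(-2,\pm 1)$, which give at most a rank-one subgroup generated by $[(-2,1)-\infty]$. The remaining four generators must come from effective divisors whose $x$-coordinates are defined over small extensions of $\QQ$, and there is no a priori height bound on where such divisors appear; this is analogous to the heavy computer search required for $\mathcal{C}_3$ in Lemma \ref{lem:d=5}. Once five divisors $\mathcal{P}_2,\mathcal{Q}_2,\mathcal{R}_2,\mathcal{U}_2,\mathcal{V}_2$ are located, I would verify their independence by computing the structure of their image in $\prod_p \mathcal{J}_2(\mathbb{F}_p)$ for several good primes, matching the rank bound to confirm both that $\rank \mathcal{J}_2(\QQ)=5$ and that these divisors generate a finite-index subgroup $\mathcal{G}_2$.

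For Chabauty--Coleman, I would work at $p=3$: a direct calculation shows $\mathcal{C}_2(\mathbb{F}_3)=\{\overline{\infty},(1,\pm 1)\}$ (the points $(0,\cdot)$ and $(-1,\cdot)$ are absent because $-1$ is a non-square modulo $3$), and the known rational points already surject onto this set, with $(-2,\pm 1)$ reducing to $(1,\pm 1)$. Using LLL on the kernel of the reduction $\mathcal{G}_2\to\mathcal{J}_2(\mathbb{F}_3)$, I would produce five short divisors $\mathcal{D}_1,\dots,\mathcal{D}_5$, decompose each as a sum of differences $[x_{ij}-x_{ij}']$ with $\pi_3(x_{ij})=\pi_3(x_{ij}')$, and compute the $7\times 5$ matrix of Coleman integrals $(\int_{0}^{\mathcal{D}_j}\eta_i)$ on the standard basis $\eta_i=x^i\,dx/(2y)$, $0\leq i\leq 6$, via the \texttt{coleman\_integral()} routine in \texttt{Sage}. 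The mod-$3$ left kernel of this matrix gives a two-dimensional space of annihilating differentials $\overline{\omega}_1,\overline{\omega}_2\in\Omega^1_{\mathcal{C}_2}(\mathbb{F}_3)$.

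To finish, I would check that for each of the three residue classes $\overline{\infty}$, $(1,1)$, $(1,-1)$, at least one of $\overline{\omega}_1,\overline{\omega}_2$ is non-vanishing. By \cite[Lemma 5.1]{Poonen-McCal} this confines each residue class to at most one rational point, and since the three known rational points already realize one point per class, we conclude $\mathcal{C}_2(\QQ)=\{\infty,(-2,\pm 1)\}$. If by ill fortune some $\mathbb{F}_3$-point were a common zero of the annihilating differentials, I would fall back on the Mordell-Weil sieve at an auxiliary set of primes $S$ and an integer $N$, exactly as in the proof of Lemma \ref{lem:d=5}, to rule out rational lifts in the offending residue class; the groundwork for this (knowing $\mathcal{G}_2$ explicitly and controlling the index $[\mathcal{J}_2(\QQ):\mathcal{G}_2]$ at small primes) is already in place from the first step.
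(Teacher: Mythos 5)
Your strategy---Chabauty--Coleman at $p=3$ after a two-descent, a Katz-style torsion check, explicit generators, and the Mordell--Weil sieve held in reserve---is the same template the paper uses, and your computation of the mod-$3$ points is correct (your $(1,\pm1)$ is the paper's $(-2,\pm1)$ under a different choice of residue representative, and you correctly identify that $(0,\cdot)$ and $(-1,\cdot)$ are absent). The one substantive error is the predicted rank: the two-descent in fact gives $\rank\mathcal{J}_2(\QQ)\leq 3$, not $\leq 5$, and the rank is exactly $3$. Consequently you need only three independent divisors rather than five, and the space of reductions of annihilating differentials in $\Omega^1_{\mathcal{C}_2}(\mathbb{F}_3)$ has dimension $7-3=4$ rather than $2$. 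Within that $4$-dimensional space the paper exhibits a single form $\overline{\omega}_4=(x^6-x-1)\,dx/2y$ that is non-vanishing on all of $\mathcal{C}_2(\mathbb{F}_3)$, so the sieve fallback you build in is never invoked.

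Relatedly, the ``main obstacle'' you anticipate---a hard computer search for extra Mordell--Weil generators, analogous to what was needed for $C_3'$ in Lemma~\ref{lem:d=5}---does not materialize. The three generators turn out to have very small Mumford representations: $\mathcal{P}_2=[x+2,1]$, $\mathcal{Q}_2=[x^2+1,1]$, and $\mathcal{R}_2=[x^3+x^2-x+1,\,x^4+2x^3+x^2+x]$, all found by a routine degree-bounded search, and their independence is confirmed by reduction modulo a handful of small primes. Since your argument is explicitly conditional on computations you planned to run, you would have caught the rank discrepancy immediately; the gap here is not a flaw in the method but an incorrect guess about what the two-descent would report, which led you to over-estimate the difficulty and under-estimate the dimension of the annihilating space.
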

\begin{proof} Let $\mathcal{J}_2$ be the Jacobian $\mathcal{C}_2$. A two-descent with \texttt{Magma} shows that $\mathcal{J}_2(\mathbb{Q})$ has rank at most three. Moreover, we compute that 
\[\gcd\big(\#\mathcal{J}_{1}(\mathbb{F}_{5}),\#\mathcal{J}_{1}(\mathbb{F}_{7})\big)=1,\]
and hence $\mathcal{J}_2(\QQ)$ has trivial torsion  \cite[Appendix]{Katz}. On the other hand, consider the three rational divisors with Mumford representations: \vspace{.1cm} 
\[\mathcal{P}_2:=[x+2,1],\;\;\;\, \mathcal{Q}_2:=[x^2+1,1], \;\;\;\, \mathcal{R}_2:=[x^3 + x^2 - x + 1,x^4 + 2x^3 + x^2 + x],  \vspace{.1cm} \]  
and let $\mathcal{G}_2$ be the subgroup they generate in $\mathcal{J}_2(\mathbb{Q})$. Then we compute that the image of $\mathcal{G}_2$ via the reduction homomorphism 
\[\mathcal{J}_2(\mathbb{Q})\rightarrow \mathcal{J}_2(\mathbb{F}_{3})\times\mathcal{J}_2(\mathbb{F}_{5})\times\mathcal{J}_2(\mathbb{F}_{7})\times\mathcal{J}_2(\mathbb{F}_{11})\times \mathcal{J}_2(\mathbb{F}_{19}),\] 
is isomorphic to
\[ \mathbb{Z}/3\mathbb{Z}\times \mathbb{Z}/18\mathbb{Z} \times \mathbb{Z}/1461606886871374660285051800\mathbb{Z},\]
which needs three generators. Therefore, since $\mathcal{J}_2(\mathbb{Q})$ has trivial torsion, $\mathcal{J}_2(\mathbb{Q})$ must have rank three, and $\mathcal{G}_2$ is a finite index subgroup.

We proceed now with the method of Chabauty and Coleman at the prime $p=3$ and look within the kernel of the reduction homomorphism $\mathcal{J}_2(\mathbb{Q})\rightarrow\mathcal{J}_2(\mathbb{F}_3)$ for suitable divisors. In particular, we use the LLL algorithm to compute a short basis of the kernel:
\[ \mathcal{D}_1=3\mathcal{P}_2-4\mathcal{Q}_2-8\mathcal{R}_2,\;\;\;\; \mathcal{D}_2=8\mathcal{P}_2-8\mathcal{Q}_2+7\mathcal{R}_2,\;\;\;\; \mathcal{D}_3=13\mathcal{P}_2+16\mathcal{Q}_2.\] 
Let $\pi_3:\mathcal{C}_2(\overline{\mathbb{Q}})\rightarrow\mathcal{C}_2(\overline{\mathbb{F}}_3)$ be the reduction map. Then we compute a decomposition of the divisors $\mathcal{D}_i$ satisfying:    
\[\;\;\mathcal{D}_i=\sum_{j=1}^7x_{ij} - 7\,(-2,1), \;\;\;\;\;\;\;\text{for some}\;\, x_{ij}\in\mathcal{C}_2(\overline{\mathbb{Q}}),\;\;\; \pi_3(x_{ij})=(-2,1).\] 
In particular, we compute the integrals of the $\mathcal{D}_i$ on the standard basis $\eta_k=x^k\,dx/2y$ for $0\leq k\leq 6$ of $\Omega_{\mathcal{C}_{2}}^1(\mathbb{Q}_3)$ as a sum of ``tiny integrals" \cite[\S3.2]{Jennifer} using a uniformizer in the residue class of $(-2,1)$. Explicitly,  
\[\Big(\int_{0}^{\mathcal{D}_j}\eta_i\Big)_{0\leq i\leq 6, 1\leq j\leq 3}=
\begin{pmatrix}
95\cdot3 + O(3^6)&\, 113\cdot3 + O(3^6) &\, -32\cdot 3 + O(3^6)\\ 
-83\cdot3 + O(3^6)&\, 65\cdot3 + O(3^6) &\, -7\cdot3^2 + O(3^6)\\ 
38\cdot3^2 + O(3^6) &\, -97\cdot3 + O(3^6) &\, 44\cdot3 + O(3^6)\\
118\cdot3 + O(3^6) &\, -41\cdot3 + O(3^6) &\, -59\cdot3 + O(3^6)\\
-74\cdot3 + O(3^6)&\, 7\cdot3 + O(3^6)&\, 64\cdot3 + O(3^6)\\
97\cdot3 + O(3^6)&\, 65\cdot3 + O(3^6)&\, 23\cdot3 + O(3^6)\\
-31\cdot3^2 + O(3^6) &\, -44\cdot3 + O(3^6) &\, -20\cdot3 + O(3^6)\\
\end{pmatrix} \vspace{.1cm}\] 
Therefore, the mod $3$ reductions of the (suitably scaled) annihilating differentials of $\mathcal{J}_2(\QQ)$ fill the subspace of $\Omega_{\mathcal{C}_{2}}^1(\mathbb{F}_3)$ spanned by \vspace{.05cm} 
\[\overline{\omega}_1=\big(x^2 + x + 1\big)\frac{dx}{2y},\;\;\;\, \overline{\omega}_2=\big(x^4 - x^3\big)\frac{dx}{2y},\;\;\;\, \overline{\omega}_3=\big(x^5 - x^3 - x - 1\big)\frac{dx}{2y},\;\;\;\, \overline{\omega}_4=\big(x^6 - x - 1\big)\frac{dx}{2y}.\vspace{.1cm}\]  
Furthermore, as in the previous case, the reduction map $\pi_3: \mathcal{C}_2(\QQ)\rightarrow\mathcal{C}_2(\mathbb{F}_3)$ on rational points is a surjection:
\[\mathcal{C}_2(\mathbb{F}_3)=\{\overline{\infty},(-2,\pm{1})\}.\]
Moreover, each residue class contains exactly one rational point, since in particular, $\overline{\omega}_4$ is non-vanishing on all of $\mathcal{C}_2(\mathbb{F}_3)$; see \cite[Lemma 5.1]{Poonen-McCal}.   
\end{proof} 
\section{Other rigid properties of arboreal representations}{\label{sec:irre}}
As we discussed in the introduction, the Vojta conjecture for curves predicts a ``surjective rigidity" in arboreal representations within one-dimensional families; see Theorem \ref{rigid}. It is therefore natural to ask if one expects this sort of behavior to hold over all monic, quadratic polynomials with integer coefficients (a $2$-dimensional family). At present, such a statement is unclear in either direction. However, if one instead considers only irreducibility (not the full Galois behavior), then there are a strikingly small number of counterexamples to this sort of phenomenon, even at low stages of iteration. For instance, in a search for integer pairs $(c, b)$ with $|b|\leq10^6$, satisfying
\[ G_2(\phi_c,b)\;\text{is transitive and}\; G_3(\phi_c,b)\;\text{is not transitive},\] 
one finds only ten examples; note here that only the basepoint $b$ is bounded, i.e. $c$ can be any integer. Moreover, if one looks at the analogous statement for the third and fourth stage of iteration, one finds no examples.     
\begin{prop}{\label{prop:3rd}} Let $\phi_c(x)=x^2+c$ for $c\in\mathbb{Z}$, and let $b\in\mathbb{Z}$ be a basepoint with $|b|\leq10^6$. Then the following statements hold:
\begin{enumerate}[topsep=10pt, partopsep=10pt, itemsep=12pt]  
\item[\textup{(1)}] $\phi_c^2(x)-b$ is irreducible if and only if $\phi_c^3(x)-b$ is irreducible, unless $(b,c)$ is in: \vspace{.175cm} 
\begin{align*}
\;\;\,R:=\big\{(-5,-1),\,(-32&5,-1),\, (-2041,7),\,(10674,-18),\,(-16400,-16),\, (45040,-16),\,\\
\,&\;\;(-204784,16),\, (-204793,7),\, (-238158,-18),\, (-777925,-1)\big\}. 
\end{align*}   
\item[\textup{(2)}] $\phi_c^3(x)-b$ is irreducible if and only if $\phi_c^4(x)-b$ is irreducible. \vspace{.175cm} 
\end{enumerate}
Therefore, if $(b,c)\not\in R$ and $\phi_c^2(x)-b$ is irreducible, then $\phi_c^4(x)-b$ is irreducible.   
\end{prop}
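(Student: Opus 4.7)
The ``only if'' direction of each equivalence is immediate from the remark after Proposition~\ref{lem:irre}: irreducibility of $\phi_c^{n+1}(x)-b$ forces irreducibility of $\phi_c^n(x)-b$, since any factor of $\phi_c^n(x)-b$ gives, by composition with $\phi_c$, a factor of $\phi_c^{n+1}(x)-b$. The content is the forward implications, and my plan is to carry them out by the same curves-to-Galois machinery of Section~\ref{sec:Galois}, specialized to questions of irreducibility rather than of full surjectivity.

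First, I would pass to preimages of $0$ by setting $\rho(x):=(x+b)^2+(c-b)$; then $\phi_c^n(\alpha)=b$ if and only if $\rho^n(\alpha-b)=0$, so the splitting fields (and hence irreducibility) coincide. With $\gamma=-b$, the adjusted post-critical elements are
\[
c_1 = b-c,\quad c_2 = c^2+c-b,\quad c_3 = (c^2+c)^2+c-b,\quad c_4 = \big((c^2+c)^2+c\big)^2+c-b,
\]
each a polynomial in $c$ with $b$ entering linearly. An irreducibility analogue of Corollary~\ref{cor:curve}, obtained by combining Proposition~\ref{lem:irre} with Lemma~\ref{2-ind}, asserts that whenever $\rho^n$ is irreducible but $\rho^{n+1}$ is not, the element $c_{n+1}$ must become a square in $K_n=\mathbb{Q}(\rho^{-n}(0))$, and hence there exist $\epsilon\in\mathbb{F}_2^n$ and $y\in\mathbb{Q}$ with
\[
c_1^{\epsilon_1}\,c_2^{\epsilon_2}\cdots c_n^{\epsilon_n}\cdot c_{n+1} \;=\; y^2.
\]

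For part~(1), with $n=2$, this produces four hyperelliptic curves
\[
D_\epsilon^{(b)}:\; y^2=(b-c)^{\epsilon_1}\,(c^2+c-b)^{\epsilon_2}\,\big((c^2+c)^2+c-b\big),\qquad \epsilon\in\mathbb{F}_2^{2},
\]
viewed as curves in the variable $c$ with $b$ a parameter; part~(2), with $n=3$, produces the eight analogous curves obtained by adjoining $c_3^{\epsilon_3}$ and multiplying by $c_4$ in place of $c_3$. The task then reduces to determining all integer points $c\in\mathbb{Z}$ on each $D_\epsilon^{(b)}$ for every $b$ with $|b|\le 10^6$. For each fixed $b$, I would apply the resultant trick from Section~\ref{sec:rationalpoints}, splitting each $y^2=f(c)g(c)$ into finitely many auxiliary systems $du^2=f,\;dv^2=g$ with $d$ supported on the primes dividing $\Res(f,g)$; this cuts the genus of the relevant pieces to at most $3$, putting them within reach of \texttt{Magma}'s integral-point machinery.

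The expected output is precisely the list $R$ in part~(1) and no counterexamples in part~(2). The principal obstacle is not mathematical but logistical: a naive sweep over $|b|\le 10^6$ would invoke the integral-point routines on the order of $10^7$ times. My plan is to precede any heavy computation with a cheap congruence sieve which discards $b$ whenever all $D_\epsilon^{(b)}$ already have empty $\mathbb{F}_p$-locus at a small prime $p$, and then to exploit the rigid divisibility of the critical orbit $\Orb_\phi(0)$ (cf.\ Theorem~\ref{thm:prime}) to rule out most of the vectors $\epsilon$ \emph{a priori}, just as in the proof of Theorem~\ref{thm:small5th}. After these reductions the remaining enumeration should be of manageable size, and the ten pairs of $R$ should emerge as the only integer points on the surviving curves in part~(1), with none surviving in part~(2).
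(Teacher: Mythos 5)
Your reduction to hyperelliptic curves is both incorrectly justified and substantially weaker than what is actually true, and this weakness is what blocks the computation. You invoke Lemma~\ref{2-ind} to conclude that $c_{n+1}$ becomes a square in $K_n$, but that lemma has the hypothesis $G_n(\phi)=\Aut(T_{2,n})$, which is much stronger than the irreducibility of $\rho^n$ that you are given; you cannot apply it here. More importantly, the correct conclusion is sharper than what you claim: if $\varphi^n_{\gamma,c}$ is irreducible and $\varphi^{n+1}_{\gamma,c}$ is reducible, then by the Capelli argument $\alpha-c$ is a square in $\mathbb{Q}(\alpha)$ for a root $\alpha$ of $\varphi^n_{\gamma,c}$, and taking norms down to $\mathbb{Q}$ shows that $\varphi^{n+1}_{\gamma,c}(\gamma)$ itself is a square in $\mathbb{Q}$ (since $\deg\varphi^n=2^n$ is even). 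This is what the paper extracts from Proposition~\ref{lem:irre} and its source in \cite{Rafe:LMS}. So one does not get $2^n$ curves indexed by $\epsilon\in\mathbb{F}_2^n$; one gets a single Diophantine equation $y^2=\varphi^{n+1}_{\gamma,c}(\gamma)$ in the unknowns $(c,y)$ with $\gamma$ a parameter. Your array of curves $D_\epsilon^{(b)}$ is thus an unnecessary (and, for the reason above, unjustified) complication, and the rigid-divisibility reduction you hope to borrow from Theorem~\ref{thm:prime} applies to the critical orbit of $x^2+c$ at $\gamma=0$, not to $\varphi_{\gamma,c}$ with $\gamma\neq 0$, so it does not cut down your $\epsilon$-vectors.

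The second gap is computational and is the one the paper explicitly warns about. Even for the single equation $y^2=\varphi^{n+1}_{\gamma,c}(\gamma)$, you cannot afford to invoke integral-point machinery for each of the $\sim 2\cdot 10^6$ values of $\gamma$: as noted in the proof, bounds on integral points on elliptic curves are doubly exponential in the coefficient heights, so this is impractical for large $\gamma$, and your proposed congruence pre-sieve is not shown to discard enough $\gamma$. What actually makes the search feasible is the elementary inequality from \cite[Lemma~4.3]{Rafe:LMS}: if $\varphi_{\gamma,c}^n(\gamma)$ is a square in $\mathbb{Z}$ (for $n\geq 3$), then $|c-\gamma|\leq 1+\sqrt{|\gamma|+1}$. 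With this, for each fixed $\gamma$ there are only $O(\sqrt{|\gamma|})$ candidate values of $c$ to test directly, and the whole sweep over $|\gamma|\leq 10^6$ becomes a finite (though multi-week) enumeration. Your plan, as written, is missing this bound and therefore does not terminate in practice.
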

\begin{proof} By a change of variables, the polynomial $\phi_c^n(x)-b$ is irreducible if and only if the polynomial $\varphi_{\gamma,c'}^n(x)$ is irreducible; here as usual, $\varphi_{\gamma,c'}(x)=(x-\gamma)^2+c'$, where $\gamma=-b$ and $c'=c-b$. Therefore, after making the substitutions $b\leftrightarrow\gamma$ and $c\leftrightarrow c'$, it suffices to classify pairs:      
\[S_3:=\big\{(\gamma,c)\in\mathbb{Z}\times\mathbb{Z}\,:\,|\gamma|\leq10^6,\;\, \varphi_{\gamma,c}^2(x)\,\text{$=$ irreducible},\; \text{and}\;\,\varphi_{\gamma,c}^3(x)\,\text{$=$ reducible}\big\}.\]
A priori, for some $\gamma$ there could be infinitely many $c$'s such $(\gamma,c)\in S_3$. However, if $(\gamma,c)\in S_3$, then $\varphi_{\gamma,c}^3(\gamma)$ is a square in $\mathbb{Z}$; see Proposition \ref{lem:irre} above. Therefore, if we take the rational points approach (as we've done elsewhere), then we see that $(c,y)$ gives an integral point on the elliptic curve $E_{\gamma}: y^2=\varphi_{\gamma,c}^3(\gamma)$ for some $y\in\mathbb{Z}$. Hence, the set of such $c$'s is finite by a theorem of Siegel. However, the best known bounds for integral points on elliptic curves are doubly exponential in the height of the coefficients of the defining equation \cite[\S IX.5]{Silverman:ellip}, and therefore a search for $c\in S_3$ with this approach becomes impractical for large $\gamma$. On the other hand, if $\varphi_{\gamma,c}^3(\gamma)$ is a square in $\mathbb{Z}$, then \cite[Lemma 4.3]{Rafe:LMS} implies that  
\[ |c-\gamma|\leq1+\sqrt{|\gamma|+1},\] 
a much smaller bound for $c$ in terms of $\gamma$. After two weeks of searching with \texttt{Magma} along these lines, we exhaust all possibilities for $(\gamma,c)$ and find the complete list in Proposition \ref{prop:3rd} statement (1). Moreover, an analogous search yields statement (2).   
\end{proof} 
Since irreducibility of the third iterate implies irreducibility of the fourth iterate in all known examples, we conjecture that this is always the case: 
\begin{conj} Let $\phi_c(x)=x^2+c$ for $c\in\mathbb{Z}$, and let $b\in\mathbb{Z}$ be any basepoint. Then $\phi_c^3(x)-b$ is irreducible if and only if $\phi_c^4(x)-b$ is irreducible. 
\end{conj}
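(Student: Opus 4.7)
The plan is to extend the argument of Proposition~\ref{prop:3rd} to all $b\in\ZZ$. After the linear change of variables $\gamma=-b$, $c'=c-b$, the conjecture becomes: for every $\gamma,c'\in\ZZ$, irreducibility of $\varphi_{\gamma,c'}^3$ implies irreducibility of $\varphi_{\gamma,c'}^4$. If a counterexample $(\gamma,c')$ exists, then Proposition~\ref{lem:irre} (contrapositive) forces some element of the adjusted post-critical set $\{-\varphi(\gamma),\varphi^2(\gamma),\varphi^3(\gamma),\varphi^4(\gamma)\}$ to be a rational square; iterating the argument used in the proof of Proposition~\ref{prop:3rd} (which peels off the first three candidates using the irreducibility of each lower iterate), the square must be $\varphi^4(\gamma)$. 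Thus the problem reduces to finding all integer points $(\gamma,c',y)$ on the affine surface
\[\mathcal{S}:\;\; y^2=\varphi_{\gamma,c'}^4(\gamma),\]
subject to the non-square conditions at the first three iterates.

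Setting $z=c'-\gamma$ yields
\[\varphi_{\gamma,c'}^4(\gamma)=\bigl((z^2+z)^2+z\bigr)^2+\gamma+z,\]
which for fixed $\gamma$ is a polynomial of degree~$8$ in $z$ defining a genus-$3$ hyperelliptic curve. I would then apply the analog of the bound $|z|\leq 1+\sqrt{|\gamma|+1}$ from \cite[Lemma 4.3]{Rafe:LMS}, whose proof uses only positivity of the iterates and so should extend from $\varphi^3$ to $\varphi^4$. For $|\gamma|$ up to a computationally feasible range, one can then determine all integer points on the resulting genus-$3$ curves via Chabauty-Coleman and the Mordell-Weil sieve, in the style of Lemmas~\ref{lem:d=5} and~\ref{lem:5th1}, and rule out the sporadic ones by checking that $\varphi^3(\gamma)$ is not itself a rational square.

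The main obstacle is uniformity in large $|\gamma|$: the narrow band contains only $O(\sqrt{|\gamma|})$ candidates $z$, but infinitely many $\gamma$ remain, and no congruence sieve at fixed moduli will eliminate them all. A conceptual resolution would invoke a Vojta-type conjecture for the surface $\mathcal{S}$, mirroring the role the curve Vojta conjecture plays in Theorem~\ref{rigid}. Unconditionally, a promising angle exploits the identity $\varphi^4(\gamma)=(\varphi^3(\gamma)-\gamma)^2+c'$, which recasts the square condition as
\[c' = \bigl(y-\varphi^3(\gamma)+\gamma\bigr)\bigl(y+\varphi^3(\gamma)-\gamma\bigr)\]
in $\ZZ$; combined with the bound $|c'-\gamma|\leq 1+\sqrt{|\gamma|+1}$, this divisor equation should restrict $y$ to a very narrow range for each $\gamma$, potentially enabling a finite case analysis that further exploits the non-squareness of $\varphi^3(\gamma)$. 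Executing this uniformly in $\gamma$ is where I expect the hardest work to lie.
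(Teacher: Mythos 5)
This statement is a \emph{conjecture} in the paper, not a proved result: the paper establishes the bounded version (Proposition~\ref{prop:3rd}, where $|b|\leq 10^6$) via a lengthy exhaustive search, and explicitly leaves the unbounded case open. So there is no proof in the paper for your proposal to match. Your write-up correctly identifies this and is really a plan of attack rather than a proof, and you are candid about where it breaks down.

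The algebraic reductions you carry out are sound. After the change of variables $\gamma=-b$, $c'=c-b$, the conjecture does reduce to: if $\varphi_{\gamma,c'}^3$ is irreducible and $\varphi_{\gamma,c'}^4$ is reducible, then $\varphi_{\gamma,c'}^4(\gamma)$ is a square in $\mathbb{Z}$ (by Stoll's criterion, since $\phi^n\circ$(quadratic) factors into two equal-degree pieces precisely when the relevant value is a square); the substitution $z=c'-\gamma$ gives $\varphi^4(\gamma)=((z^2+z)^2+z)^2+z+\gamma$, and the identity $c'=(y-\varphi^3(\gamma)+\gamma)(y+\varphi^3(\gamma)-\gamma)$ is a correct consequence. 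But, as you note yourself, none of this closes the problem: the bound $|c'-\gamma|\leq 1+\sqrt{|\gamma|+1}$ restricts to $O(\sqrt{|\gamma|})$ candidates per $\gamma$ but still leaves infinitely many $\gamma$'s, and a Chabauty--Coleman or Mordell--Weil sieve computation is inherently one-curve-at-a-time and cannot be executed uniformly over an infinite family of genus-$3$ curves. The divisor identity on $c'$ narrows $y$ but does not rule out solutions. So the ``proof'' as stated stops at exactly the point the paper's own discussion stops, namely at the need for an effective/uniform result on integral points on the surface $\mathcal{S}_4$ (or a conjectural input such as a Vojta-type statement), which is precisely why the paper records this as a conjecture. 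There is no error in your reduction, but there is also no proof here: the central difficulty --- uniformity in $\gamma$ --- remains as open in your proposal as it is in the paper.
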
 
This problem is related to (but not entirely explained by) the fact that the surfaces, 
\[\mathcal{S}_n: \big\{(\gamma,y,c): y^2=\varphi_{\gamma,c}^n(\gamma)\big\},\]
have very few integral points; for instance, most of the integral points on the elliptic surface $\mathcal{S}_3$ do not come from quadratic polynomials whose third iterate is the first to be reducible.    

\end{document}